\newtheorem{theorem}{Theorem}[section]
\newtheorem{lemma}[theorem]{Lemma}
\newtheorem{proposition}{Proposition}
\theoremstyle{definition}
\newtheorem{definition}[theorem]{Definition}
\newtheorem{remark}{Remark}
\title[Accessibility Properties of Abnormal Geodesics]{Accessibility
  Properties of Abnormal Geodesics in Optimal Control 
Illustrated by two case studies.} 
\keywords{Time minimal control for planar systems, 
  abnormal geodesics,  
  regularity of the value function,
  Zermelo navigation problems,
Chemical reaction networks.}
\author[Bernard Bonnard, Jérémy Rouot and Boris Wembe]{}
\email{bernard.bonnard@u-bourgogne.fr}
\email{jeremy.rouot@univ-brest.fr}
\email{boris.wembe@irit.fr}
\subjclass{Primary: 49K15, 49L99, 53C60, 58K50.}
\thanks{$^*$ Corresponding author: Jérémy Rouot.}
\def\dd{\mathrm d}
\begin{document}
\maketitle

\centerline{\scshape Bernard Bonnard}
\medskip
{\footnotesize
  \centerline{Institut de Mathématiques de Bourgogne, UMR 5584, Dijon}
  \centerline{INRIA, McTAO Team, Sophia Antipolis, France}
} 

\medskip

\centerline{\scshape Jérémy Rouot$^*$}
\medskip
{\footnotesize
  \centerline{Laboratoire de Mathématiques de Bretagne Atlantique, Brest, France}
}

\medskip

\centerline{\scshape Boris Wembe}
\medskip
{\footnotesize
  \centerline{Université Paul Sabatier, IRIT, Toulouse, France}
}

\bigskip

\centerline{(Communicated by the associate editor name)}

\begin{abstract}
  In this article, we use two case studies from
  geometry and optimal control of chemical network
  to analyze the relation between abnormal geodesics 
  in time optimal control, accessibility properties 
  and regularity of the time minimal value function.
\end{abstract}

\maketitle

\section*{Introduction}

In this article, one considers the time minimal control problem
for a smooth system of the form 
$\frac{\dd q}{\dd t}=f(q,u)$, where $q\in M$ is an open subset 
of $\mathbb{R}^n$ and the set of admissible control is the set $\mathcal{U}$
of bounded measurable mapping $u(\cdot)$ valued in a control domain $U$, where $U$
is a two-dimensional manifold of $\mathbb{R}^2$ with boundary.
According to the Maximum Principle \cite{pontryagin1962}, time minimal
solutions are extremal curves satisfying the constrained Hamiltonian 
equation
\begin{equation}
  \begin{aligned}
    &\dot q = \frac{\partial H}{\partial p},\quad 
    \dot p = -\frac{\partial H}{\partial q},
    \\
    &H(q,p,u) = \bm{M}(q,p),
  \end{aligned}
  \label{eq:pmp}
\end{equation}
where $H(q,p,u)=p\cdot F(q,u)$ is the pseudo (or non maximized) 
Hamiltonian, while $\bm{M}(q,p)= \max_{v\in U} H(q,p,u)$ is the 
true ({\it maximized}) Hamiltonian. A projection of an extremal 
curve $z=(q,p)$ on the $q$-space is called a geodesic.

Moreover since $\bm{M}$ is constant along an extremal curve and linear with respect
to $p$, the extremal can be either exceptional ({\it abnormal}) if $\bm{M}=0$
or non exceptional if $\bm{M}\neq 0$.
To refine this classification, an extremal subarc can be either regular 
if the control belongs to the boundary of $U$ or singular if it belongs
to the interior and satisfies the condition
$\frac{\partial H}{\partial u}=0$.

Taking $q(0)=q_0$ the accessibility set $A(q_0, t_f)$ in time $t_f$ is the 
set $\cup_{u(\cdot)\in \mathcal{U}}$ $q(t_f,x_0,u)$, 
where $t\mapsto q(\cdot,q_0,u)$ denotes the solution of the system, with
$q(0)=q_0$ and clearly since the time minimal trajectories belongs to the 
boundary of the accessibility set, the Maximum Principle is a 
parameterization of this boundary.
Since this set can have some pathologies \cite[p. 174]{kupka1980}, the 
analysis of the extremal dynamics is rather intricate.
The same holds for the time minimal value function 
$T(x_0,x_1)=\{\min\ t;\ x_1 = x(t,x_0,u),\ u(\cdot)\in \mathcal{U}\}$
even in the geodesically complete case.

This provides our geometric framework and to go further in our analysis 
we shall consider two case studies, each can be taken as a common 
thread in our analysis.

The first problem is one founding example of calculus of variations
and was set originally in 1931 by Zermelo and presented in details
by Carathéodory \cite{Zermelo1931,caratheodory1965}, in particular in the case of
linear wind for which we shall refer as the historical case along this paper.
The problem is a ship navigating on a river with a current and aiming to
reach the opposite shore.

Introducing the coordinates $(x,y)$ on the Euclidean space, where the 
current is given by $\mu(y) \frac{\partial}{\partial x}$ assuming only 
dependent upon the distance $y$ to the shore and the set of admissible 
direction is 
\[\dot q \in F_0(q) + U, \quad U=S^1.\]
This leads to study the time minimal control problem for the system
\[
  \dot {\tilde q} = F_0({\tilde q}) + \sum_{i=1}^2 u_i\, F_i({\tilde q}),
\]
where $F_1,F_2$ is an orthonormal frame for the Euclidean metric
denoted $g=\dd x^2 + \dd y^2$.
From a pure geometric point of view, one can generalize to a Zermelo
navigation problem on a surface of revolution $M\subset \mathbb{R}^3$,
endowed with the induced Riemannian metric, with parallel current and
represented as a triplet $(M,F_0,g)$.
The geodesics can be analyzed up to the action of the pseudo-group $G$ of 
smooth local change of coordinates.

The domain of navigation can be split into two subdomains :
\begin{itemize}
  \item strong current domain $\|F_0\|_g>1$ 
  \item  weak current domain : $\|F_0\|_g<1$
\end{itemize}
separated by the transitional case called moderate, with $\|F_0\|_g=1$.
In the historical problem with linear current $F_0=y\frac{\partial}{\partial x}$
it is given by $|y|=1$.

Introducing the pseudo-Hamiltonian, one gets 
\begin{equation}
  \begin{aligned}
    H(q,p,u) = H_0(q,p) + \sum_{i=1}^2 u_i H_i(q,p),
  \end{aligned}
\end{equation}
where $H_i(z) = p\cdot F_i(q),\ i=1,2$ are the Hamiltonian lifts.
In the historical problem we introduce the heading angle of the ship $\alpha$ 
and is the parameterization of the control since $F_1,F_2$ form a frame,
extremals are regular since $|u|=1$ belongs to the boundary of $U$:
$|u|\le 1$ and the Maximum Principle leads to compute the true
Hamiltonian : 
\[
  H(z) = H_0(z) + \sqrt{H_1^2(z) + H_2^2(z)}.
\]

But an equivalent point of view already introduced in the historical example
is to take as accessory control $v= \dot \alpha$, derivative of the heading angle.

Such transformation in relation with Goh transformation in optimal control 
will be called the Carathéodory-Zermelo-Goh transformation. Our study
boils down to time minimal control for the single-input control system 
\[
  \dot{\tilde q} = X(\tilde q)+ v\, Y(\tilde q), \quad v \in \mathbb{R},
\]
$X=F_0(q) + \cos \alpha F_1(q) + \sin \alpha F_2(q)$
and $Y=\frac{\partial}{\partial \alpha}$, $\tilde q =(q, \alpha)$.
Using this rewriting, the maximization condition gives us : 
$\frac{\partial H}{\partial \alpha}=0$ and hence extremals become
singular.
Moreover $p=(p_x, p_y)$ can be lifted into $\tilde p = (p, p_\alpha)$
and the Maximum Principle leads to the constraints:
\begin{align*}
  \tilde p(t)\cdot Y(\tilde q(t)) = \tilde p(t)\cdot [Y,X](\tilde q(t))=0,
\end{align*}
where 
$[Y,X](q)= \frac{\partial Y}{\partial q}(q)X(q)-\frac{\partial X}{\partial q}(q)Y(q)$
is the Lie bracket.

Abnormal geodesics are such that $H(q,p,u)=0$ and they were called limit curves 
in the historical example.
Their geometric interpretation is clear: they exist only in the strong current domain
and they are the limit curves of the cone of admissible directions.

One first objective of this article is to make a complete analysis of such abnormal
geodesics for the $2d$-Zermelo navigation problem, relating accessibility 
optimality to regularity of the value function.
It completes the series of results presented in \cite{bonnard2003}
describing the relations between singular trajectories in optimal control and 
feedback invariants.
They can be applied to more general problems, where the control is valued in a 
$2$-dimensional manifold with smooth boundary.

The main point is to analyze in this context optimality properties of geodesics 
which are non immersed curves using the techniques from singularity theory :
computing semi-normal forms and invariants in optimal control, 
see \cite{martinet1982} as a general reference for similar study for 
singularities of mappings, which goes back to the earliest work of Whitney 
\cite{whitney1955}.

The second case of interest consists the control of chemical reactions networks
like the McKeithan network:
$
\tikz[baseline={(a.base)},node distance=6mm] {
  \node (a) {T+M};
  \node[right=of a] (b) {A};
  \node[right=of b] (c) {B};
  \draw (a.base east) edge[left,->] node[above] {} (b.base west) (b.base east) edge[->] node[above] {} (c.base west);
  \draw (a) edge[bend right=40,<-] node[below right] {} (b);
  \draw (a.south) edge[bend right=60,<-] node[below right] {} (c);
}
$,
whose aim is to maximize the production of one species, e.g. $A$.
Assuming that the kinetics with respect to the temperature is
described by the Arrhenius law the dynamics can be modelled using 
the graph of reactions.
Taking the first derivative of the temperature as the control, which again
consists of a Goh transformation, the problem can be 
transformed into a time minimal control for a single-input affine 
control system.
The optimization problem can be transformed into a time minimal control problem,
where the terminal manifold $N$ is given by fixing at the final time a desired 
concentration of a chosen species, since both problems share the same geodesics.
A lot of preliminary work, see \cite{bonnard1997}, was done to analyze this problem.
In this article we shall concentrate to the so-called abnormal ({\it exceptional})
case where the geodesics are tangent to the terminal manifold $N$.
We complete the results from \cite{bonnard2019} to analyze this case.
Note that this problem can be set in the same frame than the Zermelo navigation
problem, where a barrier assimilated to a terminal manifold of codimension
one is given by $\|F_0\|_g=1$.
Again the geometric frame related to singularity theory was already in 
the earliest reference: construction of semi-normal form and concept of unfolding
in control related to the codimension of the singularities.
Recent progress of formal languages are used to describe algorithms to handle 
complicated computation, in particular in relation with the (reversible)
McKeithan network, to complete previous works justified by the network 
$A\rightarrow B \rightarrow C$.

For both case studies, the abnormal case is related to regularity properties 
of the value function, and discontinuity of the value function is analyzed in 
relation  with classification of accessibility properties.

This article is organized in three sections. In Section 1, we recall 
general results for time optimality for single-input affine
control systems, see \cite{bonnard2003}
as a general reference. Singular trajectories are introduced in relation
with feedback invariants and classified with respect to their optimality 
status.
Section 2 analyzes cusp singularities of abnormal geodesics in Zermelo 
navigation problem, where the historical example is used to compute a
semi-normal form.
In the final section, we study time minimal syntheses for $3d$-single 
input system, with terminal manifold of codimension $1$, in relation with 
the McKeithan network. 
Calculations are intricate and are handles using semi-normal forms.
We concentrate again on the abnormal case, in relation with continuity 
properties of the value function.

\section{General concepts and results from optimal control for single-input 
control system}
\label{sec:2}

We consider a smooth single-input control system of the form
\[
  \frac{\dd q(t)}{\dd t}=X(q(t))+u(t)\,Y(q(t)), \ q\in \mathbb{R}^n,
\]
where the control domain $U$ is either $\mathbb{R}$ or the interval $[-1,1]$
and the set of admissible control is the set of measurable mapping on 
$J=[0,t_f(u)]$ valued in $U$ and we denote by $q(\cdot, q_0,u)$
(in short $q(\cdot)$) defined on a subinterval of $J$ with $q(0)=q_0$.

\subsection{Maximum Principle}

Consider the problem of optimizing the transfer form $q_0$ to a smooth
submanifold $N$ of $\mathbb{R}^n$. Then the Maximum Principle tells us that
if a pair  $(u(\cdot),q(\cdot))$ is optimal on $[0,t_f]$, then there exists
an absolutely continuous vector function $p(\cdot)\in \mathbb{R}^n\setminus \{0\}$
such that if $H(q,p,u)=p\cdot (X(q)+u Y(q))$ denotes the Hamiltonian lift,
the following conditions are satisfied
\begin{enumerate}
  \item The triplet $(q,p,u)$ is solution a.e. on $[0,t_f]$ of 
    \begin{equation}
      \begin{aligned}
        &\dot q = \frac{\partial H}{\partial p},\quad 
        \dot p = -\frac{\partial H}{\partial q},
        \\
        &H(q,p,u) = \max_{v\in U} H(q,p,v).
      \end{aligned}
      \label{eq:pmp1}
    \end{equation}
  \item 
    $\bm{M}(q,p) = \max_{v\in U} H(q,p,v)$ is constant and equal to $-p^0$,
    where $p^0$ is non positive.
  \item The vector function $p(\cdot)$ satisfies at the final time the 
    transversality condition:
    \begin{equation}
      p(t_f) \perp T_{q(t_f)} N
      \label{eq:transversality}
    \end{equation}
\end{enumerate}

\begin{definition}
  A triplet $(q,p,u)$ solution of \eqref{eq:pmp1} is called an extremal and the 
  $q$-projection is called a geodesic. 
  If moreover it satisfies the transversality condition \eqref{eq:transversality}
  it is called a BC-extremal.
  An extremal is called singular if $H_Y(q(t),p(t))=p(t)\cdot Y(q(t))=0$
  a.e. on $[0,t_f]$.
  Assume $U=[-1,+1]$, a singular control is called strictly feasible 
if $u(\cdot)\in ]-1,+1[$, saturating at time $t_s$ if $|u(t_s)|=1$.
  An extremal control is called regular if it is given by $u(t)=\text{sign}
  (H_Y(q(t),p(t)))$ a.e. It is called bang-bang if the number of
  switches is finite. 
  An extremal is called abnormal (or exceptional) if $p^0=0$, so that
  from the Maximum Principle it is candidate to minimize or maximize 
  the transfer time.
  \label{def:extremal}
\end{definition}

\subsection{Computation of the singular controls}
If $Z_1,Z_2$ denote two (smooth) vector fields, the Lie bracket is computed 
with the convention:
$[Z_1,Z_2](q)= \frac{\partial Z_1}{\partial q}(q)Z_2(q)$
$-\frac{\partial Z_2}{\partial q}(q)Z_1(q)$
and if $H_1,H_2$ are the Hamiltonian lifts of $Z_1,Z_2$, it is related to
the Poisson bracket $\{H_1,H_2\}=\dd H_1(\vec{H}_2)$ by
$\{H_1,H_2\}(q,p)=p\cdot [Z_1,Z_2](q)$.
In the singular case, deriving twice with respect to time the equation
$H_Y(z(t))=0$, where $z(t)=(q(t),p(t))$, one gets:
\begin{equation}
  \begin{aligned}
    &H_Y(z(t))=\{H_Y,H_X\}(z(t))=0,
    \\
    &\{\{H_Y,H_X\},H_X\}(z(t)) + u(t)\, \{\{H_Y,H_X\},H_Y\}(z(t))=0.
    \label{eq:u-singular}
  \end{aligned}
\end{equation}
Hence, if $\{\{H_Y,H_X\},H_Y\}(z(t))$ is not identically zero the singular
extremals are given by the constrained Hamiltonian equation:
\begin{equation}
  \dot z(t) = H_X(z(t)) + u_s(t)\, H_Y(z(t)), \text{ with } 
  u_s(t) = -\frac{\{\{H_Y,H_X\},H_X\}(z(t))}{\{\{H_Y,H_X\},H_Y\}(z(t))}
\end{equation}
restricted to $H_Y=H_{[Y,X]}=0$.

\subsection{Action of the feedback pseudo-group $G_F$ \cite{dieudonne1971}}

Take a pair $(X,Y)$. The set of triplets $\{(\varphi, \alpha, \beta)\}$, 
where $\varphi$ is a local diffeomorphism and $u=\alpha(x)+ \beta(x) v$ with
$\beta \neq 0$ is a feedback, acts on the set of pairs $(X,Y)$ and this 
action defines the pseudo-feedback group $G_F$.
Each local diffeomorphism $\varphi$ can be lifted into a 
symplectomorphism using a Matthieu transformation and define an action of the
feedback group on \eqref{eq:u-singular} using the symplectomorphism 
only, one has see \cite{bonnard2003}.
\begin{theorem}
  The mapping $\lambda$ which yields for any pair $(X,Y)$ 
  the constrained differential equation \eqref{eq:u-singular}
  is covariant i.e. the following diagram is 
  commutative:
  \[ 
    \begin{tikzcd}
      (X,Y) \arrow{r}{\lambda} \arrow[swap]{d}{G_F} & \eqref{eq:u-singular} 
      \arrow{d}{G_F} \\
      (X',Y') \arrow[ur, phantom, "\scalebox{1.8}{$\circlearrowleft$}" description]\arrow{r}{\lambda}& (\ref{eq:u-singular}')
    \end{tikzcd}
  \]
  where (\ref{eq:u-singular}') consists of the equation \eqref{eq:u-singular}
  with the substitution $(X,Y)\leftarrow (X',Y')$.
  \label{thm:diagram}
\end{theorem}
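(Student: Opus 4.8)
The plan is to factor the action of $G_F$ into its two generators — the state diffeomorphism $\varphi$ and the feedback $u=\alpha(q)+\beta(q)v$ with $\beta\neq0$ — and to exploit the fact that every object entering \eqref{eq:u-singular} (the constraint functions, the singular control $u_s$, and the drift field $\vec H_X$) is assembled purely from Poisson brackets of Hamiltonian lifts. Covariance then amounts to checking that these brackets behave naturally under each generator.

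For the diffeomorphism part I would recall that the cotangent lift $\Phi$ of $\varphi$ — the Matthieu transformation — is a symplectomorphism of $T^*M$ satisfying $H_{\varphi_*Z}\circ\Phi=H_Z$ for every vector field $Z$. Preservation of the symplectic form gives $\{f,g\}\circ\Phi=\{f\circ\Phi,g\circ\Phi\}$, so by induction every iterated bracket for $(\varphi_*X,\varphi_*Y)$ pulls back through $\Phi$ to the corresponding bracket for $(X,Y)$; in particular $u_s'\circ\Phi=u_s$. Since $\Phi$ is symplectic, $\Phi_*\vec H_X=\vec H_{\varphi_*X}$ and $\Phi_*\vec H_Y=\vec H_{\varphi_*Y}$, whence $\Phi_*(\vec H_X+u_s\vec H_Y)=\vec H_{\varphi_*X}+u_s'\,\vec H_{\varphi_*Y}$. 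Thus the constrained system \eqref{eq:u-singular} for the transformed pair is exactly the $\Phi$-image of the original one.

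For the feedback part, write $X'=X+\alpha Y$ and $Y'=\beta Y$, so $H_{X'}=H_X+\alpha H_Y$ and $H_{Y'}=\beta H_Y$. Applying the Leibniz rule to the Poisson bracket and restricting to the switching surface $H_Y=0$ gives $\{H_{Y'},H_{X'}\}=\beta\{H_Y,H_X\}$ there; since $\beta\neq0$ the constraint manifold $\Sigma=\{H_Y=\{H_Y,H_X\}=0\}$ is left invariant. Moreover, on $\Sigma$ one has $\vec H_{X'}+v\,\vec H_{Y'}=\vec H_X+(\alpha+\beta v)\vec H_Y$, so the admissible velocities along $\Sigma$ are merely reparametrized by $u=\alpha+\beta v$. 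Because the singular control is characterized intrinsically as the unique value rendering the velocity tangent to $\Sigma$ (under the assumption $\{\{H_Y,H_X\},H_Y\}\not\equiv0$), the singular direction is the same vector field and the controls are related by $v_s=(u_s-\alpha)/\beta$. Combining the two steps establishes the commutative square: $\lambda(G_F\cdot(X,Y))$ is the transport of $\lambda(X,Y)$ through $\Phi$ alone, the feedback $(\alpha,\beta)$ acting only as a relabeling of the control that leaves the geometric object unchanged.

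I expect the main obstacle to lie in the bookkeeping of the feedback computation: the identity $\{H_{Y'},H_{X'}\}=\beta\{H_Y,H_X\}$ and the scaling of $u_s$ hold only \emph{modulo} the constraint functions $H_Y$ and $\{H_Y,H_X\}$, so one must repeatedly discard terms proportional to these — using the Jacobi identity for the second-order brackets that define $u_s$ — and confirm that no transversal contribution survives on $\Sigma$. The intrinsic ``unique tangent direction'' characterization is precisely what lets me avoid expanding $\{\{H_{Y'},H_{X'}\},H_{X'}\}$ and $\{\{H_{Y'},H_{X'}\},H_{Y'}\}$ term by term and instead read off the result directly.
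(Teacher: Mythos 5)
Your argument is correct: the two-step factorization (cotangent/Mathieu lift of $\varphi$ preserving Poisson brackets for the diffeomorphism part; invariance of the constraint manifold $\Sigma=\{H_Y=\{H_Y,H_X\}=0\}$ together with the affine relabeling $u=\alpha+\beta v$ of the control for the feedback part) establishes the commutativity of the diagram, and your use of the intrinsic characterization of the singular control as the unique value making the velocity tangent to $\Sigma$ correctly sidesteps the bracket bookkeeping. The paper itself gives no proof of Theorem \ref{thm:diagram} (it is quoted from \cite{bonnard2003}), but your route is essentially the standard argument from that reference.
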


\subsection{The $3d$-case}

Assume $n=3$. Introduce the following determinants :
\begin{align*}
  &D=\det(Y,[Y,X],\left[ \left[ Y,X \right],Y \right]),
  \\
  &D'=\det(Y,[Y,X],\left[ \left[ Y,X \right],X \right]),
  \\
  &D''=\det(Y,[Y,X],X).
\end{align*}
Using $H_Y=H_{[Y,X]}=H_{[[Y,X],X]}+u_s H_{[[Y,X],Y]}=0$, 
the singular control can be computed eliminating $p$ and 
depends on $q$ only and this leads to the following:
\begin{proposition}
  \begin{enumerate}
    \item 
      Assume $D$ nonzero, the singular controls are defined by the feedback
      $u_s(q)=-D'(q)/D(q)$ so that the corresponding geodesics are
      solutions of the vector field:
      $X_s(q)=X(q)+u_s(q)Y(q)$.
      Abnormal (exceptional) singular geodesics are contained in the 
      determinantal set $D''(q)=0$.
    \item The map $\lambda : (X,Y) \mapsto X_s$ is a covariant, restricting the
      action of the feedback pseudo-group to change of coordinates only.
  \end{enumerate}
  \label{prop:singular-control}
\end{proposition}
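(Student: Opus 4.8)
The plan is to reduce the singular-extremal equations \eqref{eq:u-singular} to three scalar linear conditions on the adjoint $p$, solve the first two to pin down $p$ geometrically, and then eliminate $p$ from the third. Writing the iterated Poisson brackets in \eqref{eq:u-singular} as Hamiltonian lifts, the singular extremal is constrained by
\begin{equation*}
  H_Y = H_{[Y,X]} = 0, \qquad H_{[[Y,X],X]} + u_s\, H_{[[Y,X],Y]} = 0,
\end{equation*}
using $\{H_Y,H_X\}=H_{[Y,X]}$, $\{H_{[Y,X]},H_X\}=H_{[[Y,X],X]}$ and $\{H_{[Y,X]},H_Y\}=H_{[[Y,X],Y]}$. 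The first two read $p\cdot Y=0$ and $p\cdot[Y,X]=0$; since $n=3$ and $D\neq0$ forces $Y$ and $[Y,X]$ to be independent, the nonzero vector $p$ is determined up to a scalar as $p\parallel Y\times[Y,X]$.

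First I would use the scalar triple product identity $(Y\times[Y,X])\cdot W=\det(Y,[Y,X],W)$ to rewrite the third condition. For $W=[[Y,X],X]$ and $W=[[Y,X],Y]$ this gives $H_{[[Y,X],X]}=c\,D'$ and $H_{[[Y,X],Y]}=c\,D$ with the same nonzero factor $c$ fixing the scale of $p$. Hence $u_s=-D'/D$, which depends on $q$ alone and is well defined precisely because $D\neq0$; note this matches the nondegeneracy $\{\{H_Y,H_X\},H_Y\}\neq0$ of the general construction. The associated geodesics then solve $\dot q=X_s(q)=X(q)+u_s(q)Y(q)$. For the abnormal statement, recall that along a singular extremal $H=H_X+u_sH_Y=H_X$ since $H_Y=0$, so the exceptional condition $\bm{M}=0$ is $p\cdot X=0$, i.e. $c\,\det(Y,[Y,X],X)=0$; as $c\neq0$ this is exactly $D''(q)=0$, placing the abnormal singular geodesics in the determinantal set.

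For part (2) I would treat the two generators of $G_F$ separately. Under a feedback $u=\alpha(q)+\beta(q)v$ with $\beta\neq0$ one has $X'=X+\alpha Y$, $Y'=\beta Y$, and the reproduction of the same state velocity forces $v_s=(u_s-\alpha)/\beta$; substituting gives $X'_s=X'+v_sY'=X+u_sY=X_s$, so the feedback part acts trivially on $X_s$. Under a change of coordinates $\varphi$, each bracket pushes forward, $\varphi_*[Y,X]=[\varphi_*Y,\varphi_*X]$ and so on, so that every determinant is multiplied by the common Jacobian factor $\det(D\varphi)$; this factor cancels in the ratio $D'/D$, whence $u_s$ transforms as a function, $u_s'\circ\varphi=u_s$, and $X_s$ transforms as a vector field, $X'_s=\varphi_*X_s$. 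This is exactly the asserted covariance with the feedback action restricted to coordinate changes, and it may alternatively be deduced by projecting the covariant diagram of Theorem~\ref{thm:diagram} to the $q$-space, where the symplectic lift of $(\alpha,\beta)$ leaves the singular vector field fixed.

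The main obstacle is the elimination of $p$: everything hinges on the dimension-three fact that the two linear constraints $p\cdot Y=p\cdot[Y,X]=0$ cut out a line, so that $p\parallel Y\times[Y,X]$ and the triple-product identity converts the Hamiltonian lifts into the determinants $D,D',D''$ with a single common normalizing scalar that cancels in all the relevant ratios. The remaining care is bookkeeping: checking that $D\neq0$ simultaneously guarantees independence of $Y,[Y,X]$, nonvanishing of the denominator, and the nondegeneracy hypothesis of \eqref{eq:u-singular}, and verifying that the feedback part genuinely acts trivially rather than through a reparameterization.
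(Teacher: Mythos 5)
Your proof is correct and follows essentially the same route the paper sketches just before the proposition: eliminate $p$ from $H_Y=H_{[Y,X]}=0$ using that in dimension three these constraints force $p\parallel Y\times[Y,X]$, convert the lifts into the determinants $D$, $D'$, $D''$ via the triple-product identity, and read off the covariance from Theorem~\ref{thm:diagram} together with the cancellation of the Jacobian factor in the ratio $D'/D$. Your version simply supplies the details (including the feedback-triviality check) that the paper leaves implicit.
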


\subsection{High-order Maximum Principle in the singular case}

From \cite{krener1977}, in the singular case the generalized 
Legendre-Clebsch condition
\[
  \frac{\partial}{\partial u} \frac{\dd }{\dd t}\frac{\partial H}{\partial u}(q(t))
  =
  \left\{\left\{ H_Y,H_X \right\},H_Y  \right\}(q(t)) \ge 0
\]
is a necessary optimality condition.
This leads to the following.

\begin{proposition}
  In the $3$d-case, candidates to time minimizing are contained in $DD''\ge 0$ 
  and candidates to time maximizing are contained in the set
  $DD''\le 0$.
  If the corresponding inequalities are strict, they are respectively called
  hyperbolic or elliptic.
  \label{prop:high-order-pmp}
\end{proposition}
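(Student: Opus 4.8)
The plan is to evaluate, along a singular extremal, the two scalar quantities that carry the necessary conditions of the Maximum Principle — the generalized Legendre--Clebsch coefficient $\{\{H_Y,H_X\},H_Y\}$ and the maximized Hamiltonian $\bm M$ — and to show that each equals a determinant times a common nonzero factor, so that combining the two necessary conditions yields a sign for $DD''$. The starting point is that $n=3$: on a singular arc the constraints $H_Y(z)=H_{[Y,X]}(z)=0$ mean that the adjoint $p$ annihilates the plane $\mathrm{span}(Y(q),[Y,X](q))$. Assuming, generically, that $Y$ and $[Y,X]$ are linearly independent, this plane is two-dimensional, so the covector $p$ is proportional to its annihilator; identifying covectors with vectors, $p=\lambda\,Y(q)\times[Y,X](q)$ with $\lambda\neq0$ (since $p\neq0$).

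Next I would turn the relevant Poisson brackets into the determinants $D$ and $D''$ by means of the identity $(a\times b)\cdot c=\det(a,b,c)$. Using $\{H_Y,H_X\}=H_{[Y,X]}$ and hence $\{\{H_Y,H_X\},H_Y\}=H_{[[Y,X],Y]}$,
\[
  \{\{H_Y,H_X\},H_Y\}(z)=p\cdot[[Y,X],Y](q)=\lambda\,\det\bigl(Y,[Y,X],[[Y,X],Y]\bigr)=\lambda\,D(q),
\]
while, since $H_Y=0$ forces $H=H_X$ on the singular arc, the maximized Hamiltonian is
\[
  \bm M(q,p)=H_X(z)=p\cdot X(q)=\lambda\,\det\bigl(Y,[Y,X],X\bigr)=\lambda\,D''(q).
\]

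I would then insert the two necessary conditions. The generalized Legendre--Clebsch condition (from \cite{krener1977}) gives $\{\{H_Y,H_X\},H_Y\}\geq0$, i.e. $\lambda D\geq0$, for a time-minimizing singular candidate; because this inequality issues solely from the maximization of $H=p\cdot(X+uY)$ with respect to $u$, which is common to the minimizing and the maximizing formulations, the very same inequality $\lambda D\geq0$ holds for a time-maximizing candidate. The two cases are instead separated by the sign of $\bm M=-p^0$: for time minimization $\bm M\geq0$, hence $\lambda D''\geq0$, whereas for time maximization $\bm M\leq0$, hence $\lambda D''\leq0$. Multiplying the two evaluations, $(\lambda D)(\lambda D'')=\lambda^2\,DD''$ with $\lambda^2>0$, gives $DD''\geq0$ in the minimizing case and $DD''\leq0$ in the maximizing case; strict inequalities single out the hyperbolic and elliptic subcases.

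The step I expect to be delicate is the sign bookkeeping just used, namely that the Legendre--Clebsch factor $\lambda D$ keeps its sign while $\bm M=-p^0$ changes sign between the two problems. The clean internal check is that the borderline $DD''=0$ reduces (for $\lambda\neq0$) to $D''=0$, which is exactly the abnormal/exceptional locus $\bm M=0$ that, by Definition~\ref{def:extremal}, separates candidates to minimizing from candidates to maximizing — consistent with $\bm M=\lambda D''$ being the quantity that flips and $\lambda D$ being the quantity that does not. Finally I would dispose of the degenerate configurations: if $D=0$, or if $Y$ and $[Y,X]$ are dependent (which forces both $D=0$ and $D''=0$), then $DD''=0$ belongs to both prescribed sets and the containment holds trivially.
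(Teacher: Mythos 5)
Your proposal is correct and follows essentially the same route as the paper, which gives no explicit proof beyond invoking Krener's generalized Legendre--Clebsch condition: the intended argument is precisely to use $H_Y=H_{[Y,X]}=0$ to identify $p$ with $\lambda\,Y\times[Y,X]$, so that $\{\{H_Y,H_X\},H_Y\}=\lambda D$ and $\bm M=H_X=\lambda D''$, and then combine the sign of the Legendre--Clebsch coefficient with the sign of $\bm M=-p^0$ (nonnegative for minimization, nonpositive for maximization). Your handling of the degenerate locus $D=0$ and your consistency check via the exceptional set $D''=0$ are both fine.
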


\section{Abnormal geodesics in planar Zermelo navigation problems}
\subsection{Notations and concepts}

Let $(M,g,F_{0})$ be a smooth navigation problem, where $(M,g)$ is a
Riemannian metric and $F_{0}$ is a vector field defining
the current. 
Our study is local and taking an orthonormal frame $(F_1,F_2)$ 
for the metric $g$, the problem can be written as a time minimal problem
for the system 
\begin{equation}
  \frac{\dd q(t)}{\dd t} =
  F_{0}(q(t))+\sum_{i=1,2} u_{i}(t)\,F_{i}(q(t)), \quad q=(x,y),
  \label{sys:F0F1F2}
\end{equation}
where the control $u=(u_1,u_2)$ is such that $\Vert u\Vert\leq 1$. 
The domain $M$ can be split into domain of strong current with
$\|F_0\|_g>1$ and weak current with $\|F_0\|_g<1$ and the 
case transition is called moderate with $\|F_0\|_g=1$.
Let $q_0$ be a point in the strong current domain.
Then the tangent model at $q_0$ is the cone of admissible directions
$F_0(q_0) + \sum_{i=1}^2 u_i F_i(q_0)$, $|u_i|\le 1$ whose boundary 
is limited by two directions called limit curves by Carathéodory.
They are precisely the two abnormal directions given by 
$H(q,p,u) = H_0(q,p) + \sum_{i=1}^2 u_i H_i(q,p)=0$,
where $p$ is the adjoint vector and $H_i(q,p)=p\cdot F_i(q)$, $i=0,1,2$.

The pseudo-group of local diffeomorphisms on the plane acts on the pair
$(F_0,g)$.
The metric can be set locally either in the isothermal form:
$g=a(x,y)\, (\dd x^2 +\dd y^2)$ or the polar form
$\dd r^2 + m(r, \theta)^2\, \dd \theta^2$, where the corresponding
coordinates are respectively called isothermal or polar coordinates.
In the case of revolution the isothermal and polar form becomes 
respectively $a(x)\, (\dd x^2+\dd y^2)$ and 
$\dd r^2+ m(r)^2\dd \theta^2$.

In the historical example, the metric is the Euclidean metric
$g=\dd x^2+ \dd y^2$, while the current is given by 
$F_0(q)=y \frac{\partial}{\partial x}$ and taking the line to the shore as
parallel, it is oriented along the parallel.

\subsection{Maximum Principle}

As explained in the introduction the geodesics curves can be parameterized
in two different ways using the Maximum Principle.

\subsubsection{Direct parameterization}
Maximizing the pseudo-Hamiltonian $H(q,u)$ using the constraints 
$|u|\le 1$ leads to the following:
\begin{proposition}
  Denoting $z=(q,p)$ one has:
  \begin{enumerate}
    \item The extremal controls are given by 
      $u_i= \frac{H_{i}(z)}{\sqrt{H_1(z)^2+H_2(z)^2}}$, $i=1,2$ so that
      $u_1^2+u_2^2=1$. 
    \item 
      The maximized Hamiltonian is given by 
      $\bm{M}(z)=H_{0}+\sqrt{H_1(z)^2+H_2(z)^2}$.
    \item 
      The maximized Hamiltonian $\bm{M}$ is constant and can be normalized to 
      $\{\pm 1, 0\}$ and the corresponding geodesics are hyperbolic
      if $\bm{M}=1$, elliptic if $\bm{M}=-1$ and the abnormal case corresponds 
      to $\bm{M}=0$.
  \end{enumerate}
  \label{prop:pmp-zermelo}
\end{proposition}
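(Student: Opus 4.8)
The plan is to treat this as a direct, pointwise application of the maximization step in the Maximum Principle, since the pseudo-Hamiltonian $H(z,u)=H_0(z)+u_1 H_1(z)+u_2 H_2(z)$ is affine in the control $u=(u_1,u_2)$. For parts (1) and (2), at a fixed $z=(q,p)$ I would maximize the linear form $u_1 H_1(z)+u_2 H_2(z)$ over the closed unit disk $u_1^2+u_2^2\le 1$. By the Cauchy--Schwarz inequality the maximum equals $\sqrt{H_1(z)^2+H_2(z)^2}$ and is attained exactly when $u$ is the unit vector aligned with $(H_1(z),H_2(z))$, giving $u_i=H_i(z)/\sqrt{H_1(z)^2+H_2(z)^2}$ and $u_1^2+u_2^2=1$. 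Substituting this maximizer back into $H$ immediately yields $\bm{M}(z)=H_0(z)+\sqrt{H_1(z)^2+H_2(z)^2}$, which is part (2).

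The one point requiring care, and the only genuine obstacle, is the well-definedness of this maximizer, i.e.\ that $(H_1(z),H_2(z))\neq(0,0)$ along any extremal. I would argue this from the frame hypothesis: in the planar case $(F_1,F_2)$ is an orthonormal frame, hence a basis of $T_q M$, so $H_1(z)=H_2(z)=0$ would force $p\cdot F_1(q)=p\cdot F_2(q)=0$ and therefore $p=0$, contradicting the nontriviality $p\neq 0$ in the Maximum Principle. Thus the degenerate set is empty, the square root is smooth away from it, $\bm{M}$ is smooth, and the extremal control is genuinely regular with no singular subarcs. This is precisely the contrast with the Carath\'eodory--Zermelo--Goh formulation of the introduction, where the accessory control $v=\dot\alpha$ turns the same geodesics into singular extremals.

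For part (3), I would establish constancy and then normalize. Constancy of $\bm{M}$ along extremals follows because $\bm{M}(z)$ is an autonomous true Hamiltonian generating the extremal flow, so $\frac{\dd}{\dd t}\bm{M}=\{\bm{M},\bm{M}\}=0$; equivalently it is the statement $\bm{M}=-p^0$ recorded in the Maximum Principle of Section~\ref{sec:2}. For the normalization, I would use that each $H_i(z)=p\cdot F_i(q)$ is linear in $p$, so both the maximizer $u_i$ and the $q$-component of the extremal flow are invariant under the rescaling $p\mapsto\lambda p$ with $\lambda>0$, while $\bm{M}(q,\lambda p)=\lambda\,\bm{M}(q,p)$ is positively homogeneous of degree one. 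Hence this rescaling leaves the geodesic unchanged and multiplies the constant value of $\bm{M}$ by $\lambda$, so we may normalize it to $+1$, $-1$ or $0$ according to its sign. I would then match these to the stated terminology, identifying $\bm{M}=0$, that is $p^0=0$, with the abnormal (exceptional) case of Definition~\ref{def:extremal}, and the two nonzero sign classes $\bm{M}=1$ and $\bm{M}=-1$ with the hyperbolic and elliptic cases respectively.
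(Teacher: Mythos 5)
Your proof is correct and follows essentially the same route as the paper, which presents this proposition as the direct outcome of maximizing the affine pseudo-Hamiltonian over the disk $|u|\le 1$; you merely supply the details (Cauchy--Schwarz for the maximizer, non-vanishing of $(H_1,H_2)$ from the frame property and $p\neq 0$, constancy of the autonomous Hamiltonian, and normalization by positive homogeneity in $p$) that the paper leaves implicit.
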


\subsubsection{Parameterization using the Carathéodory-Zermelo-Goh transformation}

Using the heading angle $\alpha$ of the ship amounts to set $u_1=\cos \alpha$ and
$u_2=\sin \alpha$ so that the pseudo-Hamiltonian takes the form
\[
  H(z) = H_0(z) +\cos \alpha H_1(z) + \sin \alpha H_2(z).
\]

The maximization condition leads to $\frac{\partial H}{\partial u}=0$.
Denoting $X(\tilde q)= F_0(q) + \cos \alpha F_1(q) + \sin \alpha F_2(q)$,
$Y(\tilde q)=\frac{\partial}{\partial \alpha}$,
where $\tilde q =(q, \alpha)$ denotes the extended state space. 
Using section \ref{sec:2}, one has.
\begin{proposition}
  Geodesics curves are solutions of the dynamics:
  \begin{equation}
    \dot{\tilde q} = X_s(\tilde q) = X(\tilde q) + u_s Y(\tilde q)
    \label{eq:dottildeq}
  \end{equation}
  with $u_s(\tilde q) = -D'(\tilde q)/D(\tilde q)$.
  \label{prop:singular-flot}
\end{proposition}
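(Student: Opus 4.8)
The plan is to read Proposition \ref{prop:singular-flot} as a direct instance of the $3d$ singular-control formula of Proposition \ref{prop:singular-control}, applied to the single-input affine system produced by the Carathéodory-Zermelo-Goh transformation. First I would make the reduction explicit: writing $\tilde q = (x,y,\alpha)$, the extended state space is three-dimensional ($n=3$), and the transformed dynamics $\dot{\tilde q} = X(\tilde q) + v\,Y(\tilde q)$, with $X = F_0 + \cos\alpha\,F_1 + \sin\alpha\,F_2$ (a vector field with vanishing $\alpha$-component) and $Y = \partial/\partial\alpha$, is exactly the single-input form treated in Section \ref{sec:2}, now with accessory control $v = \dot\alpha$ ranging over all of $\mathbb{R}$.

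Second, I would show that every extremal of the original problem becomes a singular extremal of the extended system. Lifting $p=(p_x,p_y)$ to $\tilde p = (p, p_\alpha)$, the Hamiltonian lift reads $\tilde H = H_X + v\,H_Y$ with $H_X = H_0 + \cos\alpha\,H_1 + \sin\alpha\,H_2$ and $H_Y = \tilde p\cdot Y = p_\alpha$. Because $v$ is now unconstrained, finiteness of the maximized Hamiltonian forces $H_Y \equiv 0$, so all extremals are singular in the sense of Definition \ref{def:extremal}. Differentiating $H_Y = 0$ once and using $\{H_Y,H_Y\}=0$ yields the first constraint $\{H_Y,H_X\} = H_{[Y,X]} = 0$; a short computation identifies this with $-\partial H_X/\partial\alpha = \sin\alpha\,H_1 - \cos\alpha\,H_2 = 0$, which is precisely the maximization condition $\partial H/\partial\alpha = 0$ of the direct parameterization of Proposition \ref{prop:pmp-zermelo}. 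This reconciles the two descriptions and confirms that they share the same geodesics.

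Third, with the singular constraints $H_Y = H_{[Y,X]} = 0$ in place, I would invoke the computation of Section \ref{sec:2}: differentiating a second time gives $\{\{H_Y,H_X\},H_X\} + u_s\{\{H_Y,H_X\},H_Y\} = 0$, and in dimension three the elimination of $p$ turns the resulting feedback into a function of $\tilde q$ alone. By Proposition \ref{prop:singular-control}, provided $D \neq 0$ (equivalently $\{\{H_Y,H_X\},H_Y\}$ is not identically zero), this feedback equals $u_s(\tilde q) = -D'(\tilde q)/D(\tilde q)$ and the geodesics solve $\dot{\tilde q} = X(\tilde q) + u_s(\tilde q)\,Y(\tilde q)$, which is the assertion.

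The main point to handle carefully is the second step: the claim that all extremals are singular relies on $v$ being genuinely unbounded after the transformation, and one must verify that the first-order singular constraint recovered from $\dot H_Y = 0$ coincides with the circle-maximization condition of the direct parameterization, so that no geodesic is lost or gained in passing between the two formulations. Once this equivalence is secured, the elimination of $p$ and the identification of the feedback with $-D'/D$ are routine consequences of Proposition \ref{prop:singular-control}.
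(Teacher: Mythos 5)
Your proposal is correct and follows essentially the same route as the paper, which simply invokes the general single-input results of Section 1 (Proposition \ref{prop:singular-control} in the $3d$ case) applied to the Goh-extended system $(X,Y)$ with $Y=\partial/\partial\alpha$. Your added verifications --- that unboundedness of $v$ forces $H_Y=p_\alpha\equiv 0$ so every extremal is singular, and that $H_{[Y,X]}=\sin\alpha\,H_1-\cos\alpha\,H_2=0$ reproduces the maximization condition of Proposition \ref{prop:pmp-zermelo} --- are exactly the details the paper leaves implicit (and one can add that $D=\det(F_1,F_2)\neq 0$ holds identically here since $(F_1,F_2)$ is a frame, so the feedback $u_s=-D'/D$ is always well defined).
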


Next we introduce the following crucial set from the control point of view.

\begin{definition}
  Take $(Z_1,Z_2)$ two smooth vector fields in $\mathbb{R}^n$. 
  The collinear set is the feedback invariant set 
  $\mathcal{C}=\{q; \ Z_1(q) \text{ and } Z_2(q) \text{ are collinear.}\}$. 
  \label{def:collinear-set}
\end{definition}
One has clearly.
\begin{proposition}
  In the Goh extension, 
  \begin{enumerate}
    \item 
      the collinear set is defined by:
      $\{\ \exists \alpha; \ F_0(q) = \cos \alpha F_1(q) + \sin \alpha F_2(q)\},$
    \item 
      the geodesics curves solutions of \eqref{eq:pmp} are immersed curves
      outside the collinear set,
    \item 
      only abnormal geodesics can be non immersed curves when meeting the 
      collinear set $\|F_0\|_g=1$.
  \end{enumerate}
  \label{prop:collinear-set}
\end{proposition}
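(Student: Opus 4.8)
The plan is to reduce all three claims to a single elementary observation about the base velocity $\dot q$, which is the same object in both parameterizations of Propositions~\ref{prop:pmp-zermelo} and~\ref{prop:singular-flot}. First I would record that along any geodesic
\[
  \dot q = F_0(q) + \cos\alpha\, F_1(q) + \sin\alpha\, F_2(q),
\]
since the accessory control $v=\dot\alpha$, being the coefficient of $Y=\partial/\partial\alpha$, moves only the fibre variable $\alpha$ and does not enter $\dot q$; in the direct parameterization $(\cos\alpha,\sin\alpha)=(u_1,u_2)$ is merely the optimal control on the unit circle. Writing $R_\alpha:=\cos\alpha\,F_1+\sin\alpha\,F_2$ and using that $(F_1,F_2)$ is $g$-orthonormal, the vector $R_\alpha$ is a $g$-unit vector for every $\alpha$.

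For (i) I would compute the collinear set of the extended pair $(X,Y)$ of Definition~\ref{def:collinear-set}. As $Y=\partial/\partial\alpha$ never vanishes and has no $q$-component while $X=F_0+R_\alpha$ has no $\alpha$-component, the two fields are collinear precisely when $X=0$, i.e. $F_0(q)=-R_\alpha(q)$; after the harmless relabelling $\alpha\mapsto\alpha+\pi$ this is the condition displayed in the statement. Projecting to the base and using $\|R_\alpha\|_g=1$ shows this forces $\|F_0(q)\|_g=1$, and conversely every point of the moderate-current locus admits such a heading; hence the collinear set is exactly $\{\|F_0\|_g=1\}$. Claim (ii) is then immediate: outside this locus $\|F_0(q)\|_g\neq\|R_\alpha(q)\|_g$, so $\dot q=F_0+R_\alpha\neq 0$ for every $\alpha$, and the base curve $t\mapsto q(t)$ is an immersion; a geodesic can fail to be immersed only at a time $t_0$ with $\dot q(t_0)=0$, which by the above pins $q(t_0)$ to $\|F_0\|_g=1$.

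The crux is claim (iii), and here the main step is the identity $\bm{M}(z)=p\cdot\dot q$. Using $\bm{M}=H_0+\sqrt{H_1^2+H_2^2}$ from Proposition~\ref{prop:pmp-zermelo} together with $\cos\alpha=H_1/\sqrt{H_1^2+H_2^2}$ and $\sin\alpha=H_2/\sqrt{H_1^2+H_2^2}$ gives
\[
  \bm{M}=H_0+\cos\alpha\,H_1+\sin\alpha\,H_2=p\cdot\bigl(F_0+R_\alpha\bigr)=p\cdot\dot q,
\]
and in the extended picture $\tilde p\cdot\dot{\tilde q}=p\cdot\dot q$ since $p_\alpha=H_Y=0$ on singular extremals. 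At a non-immersion time $t_0$ one has $\dot q(t_0)=0$, so $\bm{M}(z(t_0))=0$; because the Maximum Principle makes $\bm{M}$ constant along the extremal, $\bm{M}\equiv 0$ and the geodesic is abnormal. This yields (iii): non-immersed geodesics occur only in the abnormal case, and only where $\|F_0\|_g=1$.

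I expect the only delicate point to be the well-definedness of $\sqrt{H_1^2+H_2^2}$ underlying the identity above, i.e. that $(H_1,H_2)\neq(0,0)$. This I would settle by noting that $F_1,F_2$ span the tangent plane in the planar problem, so $H_1=H_2=0$ would force $p=0$, while on the Goh-singular extremals $p_\alpha=H_Y=0$ makes nontriviality of $\tilde p$ equivalent to $p\neq 0$. Thus $(H_1,H_2)$ never vanishes and the computation is valid without exception; everything else is the elementary algebra above.
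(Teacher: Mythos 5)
Your argument is correct and is essentially the reasoning the paper relies on: the paper states this proposition without proof (``One has clearly''), and the key identity you use, $\bm{M}(z)=p\cdot\dot q$ vanishing exactly when $\dot\sigma=0$ on the locus $\|F_0\|_g=1$, is precisely the computation the authors spell out later in the proof of Proposition~\ref{prop:cusp}. Your extra care about $(H_1,H_2)\neq(0,0)$ is sound but not needed for the identity $\bm{M}=p\cdot\dot q$, which holds by definition of the maximized Hamiltonian along the extremal.
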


Before going further in our analysis let us analyze the case of revolution
with parallel current as a generalization of the historical example.

\subsubsection{The case of revolution with parallel current}

In polar coordinates, one has:
\[
  F_0(q) = \mu(r)\frac{\partial}{\partial \theta},\quad 
  g = \dd r^2 + m(r)^2\dd \theta^2
\]
so that 
\[
  F_1 = \frac{\partial}{\partial r},\quad 
  F_2 = \frac{1}{m(r)}\frac{\partial}{\partial \theta}.
\]
We define:
\begin{align*}
  &X = \cos \alpha \frac{\partial}{\partial r} 
  + \left(\mu(r)+ \frac{\sin \alpha}{m(r)}\right)
  \frac{\partial}{\partial \theta },
  \\
  &[Y,X] = \sin \alpha \frac{\partial}{\partial r} - \frac{\cos \alpha}{m(r)}
  \frac{\partial}{\partial \theta },
  \\
  &\left[ \left[ Y,X \right],X \right] 
  = \left(-\mu'(r)\sin \alpha + \frac{m'(r)}{m(r)^2}\right) 
  \frac{\partial}{\partial \theta },
  \\
  &\left[ \left[ Y,X \right],Y \right] 
  = \cos \alpha \frac{\partial}{\partial r} + \frac{\sin \alpha}{m(r)} 
  \frac{\partial}{\partial \theta }.
\end{align*}

\begin{lemma}
  Computing we have:
  \begin{itemize}
    \item 
      $D=\frac{1}{m(r)}$,
    \item 
      $D'=-\mu'(r)\sin^2 \alpha + \frac{m'(r)}{m(r)^2}\sin \alpha$,
    \item
      $D''=\mu(r)\sin \alpha + \frac{1}{m(r)}$.
  \end{itemize}
  Hence $D$ is non zero.
  \label{lem:Ds}
\end{lemma}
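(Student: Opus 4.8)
The plan is to read off each of the three $3\times3$ determinants directly from the coordinate expressions of the vector fields in $\tilde q=(r,\theta,\alpha)$, exploiting a structural feature that makes every determinant collapse to a single $2\times2$ computation. The key observation is that $Y=\partial/\partial\alpha$ is the third coordinate field, so in the ordered basis $(\partial/\partial r,\partial/\partial\theta,\partial/\partial\alpha)$ it is the column $(0,0,1)^{\top}$, whereas $X$, $[Y,X]$, $[[Y,X],X]$ and $[[Y,X],Y]$ all lie in the horizontal plane spanned by $\partial/\partial r$ and $\partial/\partial\theta$ and hence have vanishing $\alpha$-component, as the explicit formulas preceding the lemma show. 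First I would assemble, for each of $D$, $D'$, $D''$, the matrix whose columns are the listed fields in this basis.

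Next I would expand each determinant along the column occupied by $Y$, which is the first column in all three cases. Since that column is $(0,0,1)^{\top}$, only the entry in the $\alpha$-row contributes, with cofactor sign $(-1)^{1+3}=+1$, and the surviving minor is obtained by deleting the $\alpha$-row and the $Y$-column. Consequently each determinant equals the $2\times2$ determinant of the horizontal components of the two remaining fields. Carrying out the three elementary evaluations: for $D$ the remaining fields are $[Y,X]=(\sin\alpha,-\cos\alpha/m)$ and $[[Y,X],Y]=(\cos\alpha,\sin\alpha/m)$, whose determinant is $\sin^2\alpha/m+\cos^2\alpha/m=1/m(r)$ by the Pythagorean identity; for $D'$ the second field is $[[Y,X],X]=(0,-\mu'\sin\alpha+m'/m^2)$, giving $\sin\alpha\,(-\mu'\sin\alpha+m'/m^2)=-\mu'(r)\sin^2\alpha+\tfrac{m'(r)}{m(r)^2}\sin\alpha$; and for $D''$ the second field is $X=(\cos\alpha,\mu+\sin\alpha/m)$, giving $\mu\sin\alpha+\sin^2\alpha/m+\cos^2\alpha/m=\mu(r)\sin\alpha+1/m(r)$, again using $\sin^2\alpha+\cos^2\alpha=1$.

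The nonvanishing of $D$ is then immediate: since $g$ is Riemannian the warping function satisfies $m(r)>0$, so $D=1/m(r)$ never vanishes. There is no real obstacle in this argument; the only points needing a little care are the cofactor sign (trivially $+1$ because the single nonzero entry of the $Y$-column sits in the last row) and the two uses of $\sin^2\alpha+\cos^2\alpha=1$ that collapse $D$ and $D''$ to their closed forms. If desired, the bracket expressions themselves can be verified beforehand from the convention $[Z_1,Z_2]=\frac{\partial Z_1}{\partial q}Z_2-\frac{\partial Z_2}{\partial q}Z_1$ fixed in Section~\ref{sec:2}, but they are already supplied as the input to this computation.
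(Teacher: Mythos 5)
Your computation is correct and coincides with what the paper does: the lemma is stated as a direct consequence of the coordinate expressions for $X$, $[Y,X]$, $[[Y,X],X]$ and $[[Y,X],Y]$ listed immediately before it, and your reduction of each $3\times 3$ determinant to a $2\times 2$ minor via the column $Y=(0,0,1)^{\top}$ is exactly the intended ``Computing we have'' step. All three closed forms and the positivity of $m(r)$ (hence $D\neq 0$) check out.
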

This yields the following proposition.
\begin{proposition}
  In the case of revolution, with parallel current one has:
  \begin{enumerate}
    \item 
      The pseudo-Hamiltonian in the $\tilde q$-representation takes the form:
      \[
        H=p_r\cos \alpha + p_\theta \left(\mu(r)+\frac{\sin \alpha}{m(t)}+p^0\right).
      \]
    \item The Clairaut relation is satisfied i.e. $p_\theta$ is constant and 
      moreover 
      \begin{equation}
        p_\theta \left( \mu(r) + \frac{1}{m(r)\sin \alpha} \right)+ p^0 =0.
        \label{eq:clairaut}
      \end{equation}
    \item 
      The geodesics equations 
      $\dot{\tilde q} = X(\tilde q) -\frac{D'(\tilde q)}{D(\tilde q)}Y(\tilde q)$
      can be integrated by quadratures, solving the implicit equation
      \eqref{eq:clairaut} to integrate the dynamics of the heading 
      angle $\alpha$.
    \item 
      Singular points for the geodesics dynamics occur only restricting 
      to abnormal geodesics in $D''=0$ when $D'=0$.
  \end{enumerate}
  \label{prop:quadratures}
\end{proposition}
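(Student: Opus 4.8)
The plan is to establish the four assertions successively, each feeding the next, and all four reduce to short computations once the Hamiltonian lifts are in hand. For assertion~(1) I would read off the lifts in the polar chart directly from the frame: $H_0=p\cdot F_0=p_\theta\mu(r)$, $H_1=p\cdot F_1=p_r$ and $H_2=p\cdot F_2=p_\theta/m(r)$. Substituting $u_1=\cos\alpha$, $u_2=\sin\alpha$ into $H=H_0+u_1H_1+u_2H_2+p^0$ then gives $H=p_r\cos\alpha+p_\theta\bigl(\mu(r)+\sin\alpha/m(r)\bigr)+p^0$, the term $p^0$ being additive and outside the bracket; this step is purely mechanical.

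For assertion~(2), the Clairaut first integral $p_\theta=\text{const}$ is immediate because $\theta$ is cyclic: $H$ is $\theta$-independent, so $\dot p_\theta=-\partial H/\partial\theta=0$. To obtain the explicit relation~\eqref{eq:clairaut} I would use the maximization condition $\partial H/\partial\alpha=0$ of the Carathéodory--Zermelo--Goh parameterization, equivalently the singular condition $\{H_Y,H_X\}=H_{[Y,X]}=0$, which with the bracket $[Y,X]$ computed above reads $p_r\sin\alpha=p_\theta\cos\alpha/m(r)$, i.e. $p_r=p_\theta\cos\alpha/(m(r)\sin\alpha)$. Inserting this into the Hamiltonian of~(1) and using $\cos^2\alpha+\sin^2\alpha=1$ collapses it to $H=p_\theta\bigl(\mu(r)+1/(m(r)\sin\alpha)\bigr)+p^0$; since $\bm{M}=-p^0$ along any extremal, so that the augmented $H$ vanishes, this is precisely~\eqref{eq:clairaut}.

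For assertion~(3) I would exploit the two first integrals just found. With $p_\theta$ and $p^0$ fixed, \eqref{eq:clairaut} determines $\sin\alpha$ as an explicit function of $r$, say $\sin\alpha=\sigma(r)$ --- this is the implicit equation governing the heading angle --- and hence $\cos\alpha=\pm\sqrt{1-\sigma(r)^2}$. The residual dynamics $\dot r=\cos\alpha$ and $\dot\theta=\mu(r)+\sigma(r)/m(r)$ then separate, so that $t$ and $\theta$ are obtained as quadratures in $r$; this is the classical Clairaut reduction for surfaces of revolution carried over to the navigation setting. Inverting $r(t)$ and back-substituting closes the integration, and $\alpha(t)$ follows by differentiating $\sin\alpha=\sigma(r)$, consistently with $\dot\alpha=u_s=-D'/D$.

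For assertion~(4), Lemma~\ref{lem:Ds} gives $D=1/m(r)\neq 0$, so the feedback $u_s=-D'/D=-m(r)D'$ never blows up and the geodesic field $X_s=X-\tfrac{D'}{D}Y$ is smooth; its singular points are thus its genuine zeros. Componentwise $\dot r=\cos\alpha$, $\dot\theta=\mu(r)+\sin\alpha/m(r)$, $\dot\alpha=u_s$, so a zero forces $\cos\alpha=0$, whence $\sin^2\alpha=1$ and $D''=\sin\alpha\,\dot\theta$, giving $\dot\theta=0\Leftrightarrow D''=0$, while $\dot\alpha=0\Leftrightarrow D'=0$. Hence equilibria sit exactly on $\{D''=0\}\cap\{D'=0\}$, and since $D''=0$ with $\cos\alpha=0$ is the collinear set $\|F_0\|_g=1$ carrying the abnormal limit curves (Proposition~\ref{prop:collinear-set}), such points lie only on abnormal geodesics. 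The main obstacle I anticipate is not any isolated computation but the bookkeeping that ties the turning points $\cos\alpha=0$ of the quadrature in~(3) to the equilibria of~(4): one must verify that the reduced flow can stall only on $D''=0$, and that the supplementary condition $D'=0$ is exactly what separates a true equilibrium from a cusp of the projected geodesic, where $\dot\alpha\neq0$ still carries the curve across the collinear set.
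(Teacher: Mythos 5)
Your proposal is correct and follows exactly the route the paper intends: the paper gives no explicit proof of Proposition \ref{prop:quadratures}, presenting it as an immediate consequence of the bracket computations and Lemma \ref{lem:Ds}, and your argument supplies precisely those omitted steps (reading off the lifts in the polar frame, cyclicity of $\theta$ for the Clairaut integral, substitution of the singular condition $H_{[Y,X]}=0$ into $H=-p^0$ to get \eqref{eq:clairaut}, separation of variables for the quadratures, and the identity $D''=\sin\alpha\,\dot\theta$ at $\cos\alpha=0$ for the equilibrium analysis). Your closing remark correctly resolves the one ambiguity in item (4), distinguishing cusps ($D''=0$, $D'\neq 0$) from true equilibria ($D''=D'=0$), consistent with Propositions \ref{prop:collinear-set} and \ref{prop:cusp}.
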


In the historical example a cusp singularity was observed in \cite{caratheodory1965}
and it will serve  as a model to analyze the general case in the frame of 
singularity theory, since integrability is not a technical 
requirement. One needs to recall some elementary facts.

\subsection{A brief recap about cusp singularity theory for geodesics \cite{thom1989}}

In our problem, one considers a geodesic curve $t\mapsto \sigma(t)$
defined on $J$ and meeting $\|F_0\|_g=1$ at $t=t_0$.
Making a time translation, one can take $J=[-t_0,t_0]$
so that $\sigma$ touches the boundary at $t=0$, so that
$\dot \sigma(0)=0$.

\begin{definition}
  The point $\sigma(0)$ is a cusp point of order $(p,q)$,
  $2\le p\le q$ if $\sigma^{(p)}(0)$ and 
  $\sigma^{(q)}(0)$ are independent.
  The point $\sigma(0)$ is called an ordinary cusp 
  (or a semicubical point) if $p=2$, $q=3$, and a 
  ramphoid cusp if $p=2$, $q=4$.
  \label{def:cusp}
\end{definition}

\subsubsection{Semicubical point}
From \cite[p.~56]{walker1978}, an algebraic model in $\mathbb{R}[x,y]$
at $\sigma(0)=0$ is given by the equation $x^3-y^2=0$.
Moreover it is the transition between a $\mathbb{R}$-node solution
of the equation $x^{3}-x^{2}+y^{2}=0$, where the origin is a double point
with two distinct tangents at $0$: $x\pm y=0$ and a $\mathbb{C}-$node 
solution of $x^{3}+x^{2}+y^{2}=0$ with two complex tangents at $0$ given by
$x\pm iy=0$ and with two distinct components $x=y=0$ and a smooth 
real branch.

A neat description from singularity theory suitable in our analysis is given
by \cite[p.~65]{arnold1991} and is associated to a typical perestroika
of a plane curve depending on a parameter and having a semicubical
cusp point for some value of the parameters:
\begin{figure}[ht]
  \centering
  \begin{tabular}{ccc}
    \includegraphics[scale=0.27]{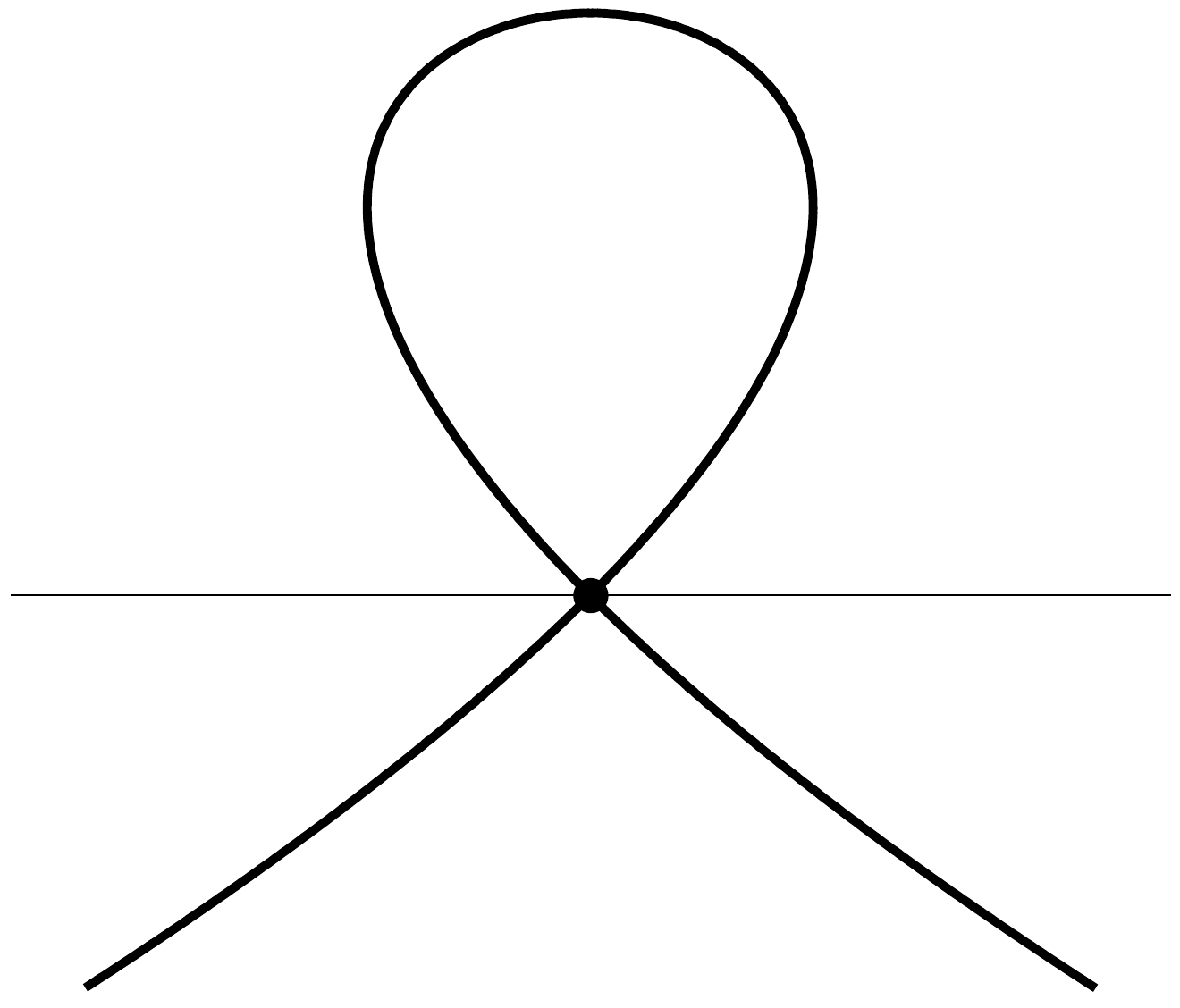}
    &
    \includegraphics[scale=0.27]{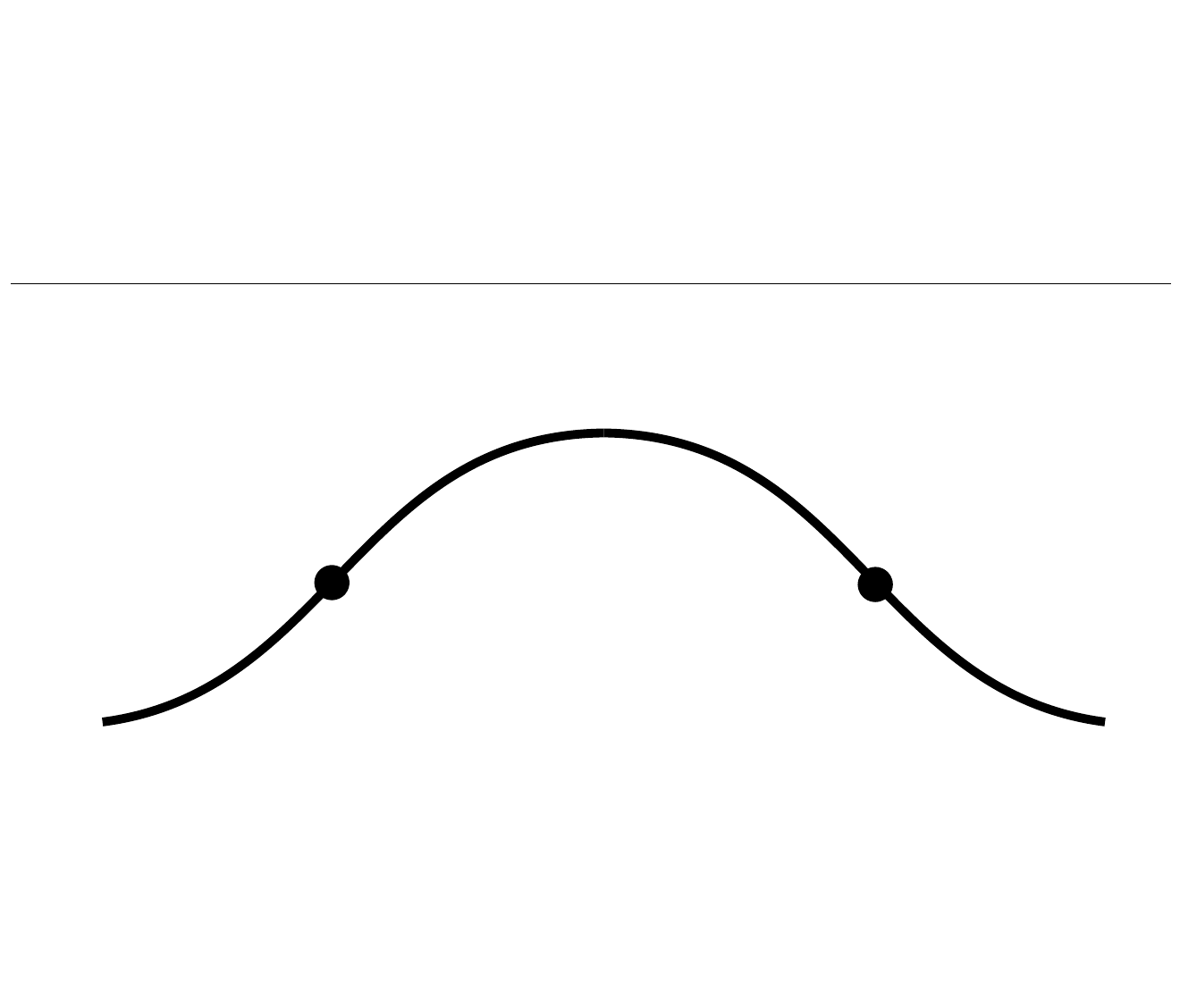}
    &
    \includegraphics[scale=0.27]{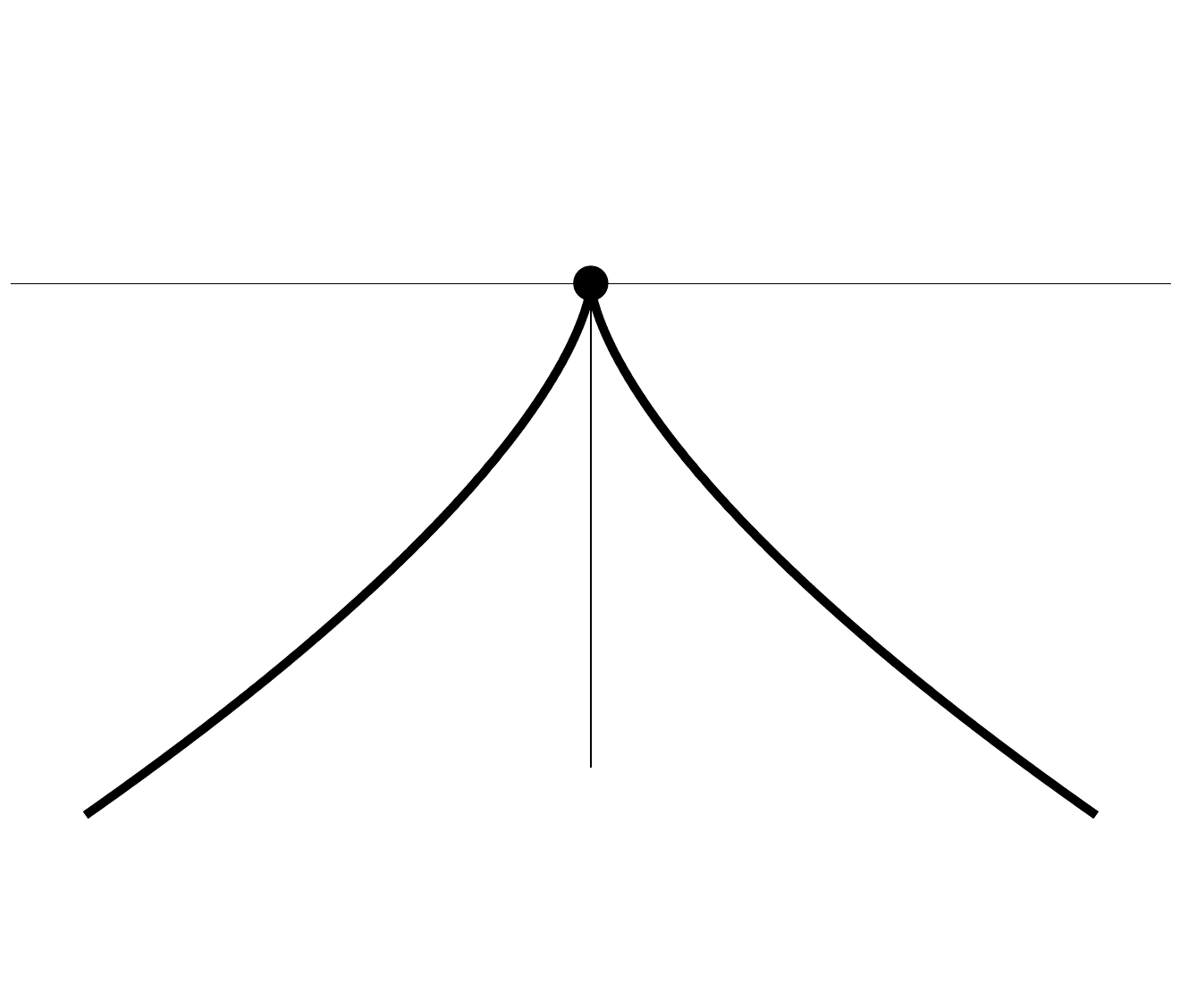}
  \end{tabular}
  \caption{Unfolding semicubical cusp.}
  \label{fig:nodes}
\end{figure}
where the curves sweep an umbrella while their inflectional tangents sweep
another umbrella surface.

\subsubsection{Semicubical unfolding in the historical example}

In the historical example the geodesics equation is given by 
\begin{equation}
  \frac{\dd x}{\dd t}=y+\cos\alpha,\quad
  \frac{\dd y}{\dd t}=\sin\alpha,\quad
  \frac{\dd\alpha}{\dd t}=-1 + \alpha^2.
\end{equation}

The boundary of moderate current is taken as $y=-1$ and making the 
translation $Y=y+1$and expanding at $\alpha=0$ up to order $2$,
the system takes the form

\begin{equation}
  \frac{\dd x}{\dd t}=Y-\alpha^{2}/2,\quad \frac{\dd Y}{\dd t}=\alpha, \quad 
  \frac{\dd\alpha}{\dd t}=-1+\alpha^{2}.
  \label{eq:zermelo-model-app}
\end{equation}
and take a point $q_0=(x_0, y_0, z_0)$ in a neighbourhood of $0$ in the strong
current domain $Y<0$ and let $t\mapsto \sigma(t)$ be a geodesic curve with
$\sigma(0)=(x_0,y_0,z_0)$.

Then one has:
\begin{proposition}
  Fixing $q_0$ and considering the geodesics passing through $q_0$, we have:
  \begin{enumerate}
    \item 
      The abnormal geodesic meets the boundary at a semicubical cusp 
      with vertical tangent.
    \item 
      Hyperbolic geodesics are self-intersecting curves corresponding 
      to a $\mathbb{R}$-node.
    \item 
      Elliptic geodesics exist only in the strong current domain $Y<0$
      and correspond to a $\mathbb{C}$-node.
  \end{enumerate}
  \label{prop:rnode}
\end{proposition}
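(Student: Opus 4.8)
The plan is to reduce everything to the explicit planar curve traced by the projection $(x,Y)$ of the model geodesics \eqref{eq:zermelo-model-app}, and to read off all three singularity types from one computation. First I would exhibit a first integral of \eqref{eq:zermelo-model-app}: dividing $\dot Y=\alpha$ by $\dot\alpha=\alpha^2-1$ and integrating gives $Y-\tfrac12\ln(1-\alpha^2)=\mathrm{const}$, i.e. a conserved quantity $C:=Y+\alpha^2/2+O(\alpha^4)$. Comparing with the direct parameterization of Proposition~\ref{prop:pmp-zermelo}, where $\bm M=p_x(y+\sec\alpha)$ with $p_x$ constant and $y+\sec\alpha=C+O(\alpha^4)$, identifies $C$ (up to the fixed sign of $p_x$) with the normalized maximized Hamiltonian: $C=0$ is the abnormal case, while $C>0$ and $C<0$ are the hyperbolic and elliptic ones. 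Since $Y=C-\alpha^2/2+\cdots\le C$ along the flow, the elliptic geodesics ($C<0$) never reach $Y=0$ and remain in the strong current domain, which already establishes the existence statement in item~(3).

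Next, because $\dot\alpha=\alpha^2-1\approx-1$ does not vanish near the boundary, I would use $\alpha$ as a regular parameter. Substituting $Y=C-\alpha^2/2$ into $\dot x=Y-\alpha^2/2$ gives $\mathrm dx/\mathrm d\alpha=(C-\alpha^2)/(\alpha^2-1)=-1+(1-C)/(1-\alpha^2)$, whence, after translating $x$ so that $\xi=0$ at $\alpha=0$,
\begin{equation*}
  \xi:=x-x_\ast=-C\alpha+\frac{\alpha^3}{3}+O(\alpha^5),\qquad
  \eta:=Y=C-\frac{\alpha^2}{2}+O(\alpha^4).
\end{equation*}
Eliminating $\alpha$ yields the implicit model of the whole family, namely $9\,\xi^2=2\,(C-\eta)(C+2\eta)^2+\cdots$. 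For $C=0$ this is $9\xi^2+8\eta^3=0$, the model $x^3-y^2=0$ of a semicubical cusp; equivalently $\sigma''(0)=(0,-1)$ and $\sigma'''(0)=(-2,0)$ are independent, so by Definition~\ref{def:cusp} the abnormal geodesic has an ordinary cusp whose tangent is carried by $\sigma''(0)$, i.e. vertical. This is item~(1).

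For $C\neq0$ I would locate the singular point of the curve. The partials of $9\xi^2-2(C-\eta)(C+2\eta)^2$ vanish only at $(\xi,\eta)=(0,-C/2)$, corresponding to the parameter values $\alpha=\pm\sqrt{3C}$; with $\eta=-C/2+w$ the equation becomes $9\xi^2=12\,C\,w^2-8w^3+\cdots$, whose quadratic part is $9\xi^2-12\,C\,w^2$. When $C>0$ this splits into two distinct real lines $\xi=\pm\tfrac{2}{3}\sqrt{3C}\,w$: the point is a crunode, the geodesic genuinely self-intersects at $\alpha=\pm\sqrt{3C}$, located at $Y=-C/2<0$, which is the $\mathbb R$-node of item~(2). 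When $C<0$ the quadratic part is definite, the tangents $\xi=\pm\tfrac{2}{3}i\sqrt{3|C|}\,w$ are complex conjugate, and the real singular point $(0,-C/2)$, now at $Y>0$, is an isolated acnode disjoint from the geodesic, which stays a smooth real branch in $Y<0$: this is precisely the $\mathbb C$-node of item~(3), realizing the perestroika $x^3-x^2+y^2=0\to x^3-y^2=0\to x^3+x^2+y^2=0$ of Figure~\ref{fig:nodes}.

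The main obstacle is to guarantee that the neglected higher-order terms are harmless, i.e. that the jets above determine the singularity of the true geodesics and not merely that of the truncated model \eqref{eq:zermelo-model-app}. I would settle this by finite determinacy: the semicubical cusp, the crunode and the acnode are all finitely determined and stable under higher-order perturbation of the defining equation and under the coordinate changes of the pseudo-group $G$, so the $(2,3)$-jet at $C=0$ and the quadratic part $9\xi^2-12Cw^2$ for $C\neq0$ are decisive; the $O(\alpha^4)$ contributions from $\sec\alpha$ and from the order-two truncation only displace the self-intersection within the same neighbourhood without altering its type. A secondary point to verify is the normalization $\mathrm{sign}(p_x)>0$ used to align the sign of $C$ with the hyperbolic/elliptic labelling, which is fixed once and for all by the orientation of the geodesic along the current.
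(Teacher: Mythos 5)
Your proof is correct, and it takes a genuinely different route from the paper, which in fact states Proposition \ref{prop:rnode} without proof: the abnormal case is only established later and in greater generality in Proposition \ref{prop:cusp}, by computing the time-jet $\sigma(t)=(-\tfrac13\delta^2t^3+o(t^3),\tfrac12\delta t^2+o(t^2))$ of the normal form, and the self-intersection of hyperbolic geodesics surfaces only implicitly inside the proof of Theorem \ref{thm:value-function} through order-$3$ expansions of $\sigma_a$ and $\sigma_h$. You instead exploit the first integral $C=Y+\alpha^2/2+O(\alpha^4)$ (the Clairaut-type relation, identified with $\bm{M}/p_x$), reparametrize by $\alpha$, and eliminate to get the one-parameter family of algebraic curves $9\xi^2=2(C-\eta)(C+2\eta)^2$; the trichotomy cusp/crunode/acnode is then read off from the sign of $C$, which simultaneously yields the confinement $Y\le C$ proving the existence statement in item (3) and exhibits the perestroika of Fig.~\ref{fig:nodes} explicitly, something the paper only asserts. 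What the paper's jet method buys is uniformity (it extends verbatim to the general $2d$ problem of Proposition \ref{prop:cusp}, where no first integral is available); what yours buys is a complete, semi-global picture of each geodesic in the integrable model, including the exact location $\eta=-C/2$, $\alpha=\pm\sqrt{3C}$ of the double point. Two small points to tidy: the integration of $\dd x/\dd\alpha$ actually gives $\xi=-C\alpha+(1-C)\alpha^3/3+O(\alpha^5)$, so a term $-C\alpha^3/3$ is dropped (harmless, since it perturbs neither the $3$-jet at $C=0$ nor the nondegenerate quadratic part $9\xi^2-12Cw^2$ at the double point, but it should be said); and the critical points of the defining polynomial also include $(0,C/2)$, which you should note is excluded because it does not lie on the curve when $C\neq0$. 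The finite-determinacy argument you invoke to control the higher-order terms is the right one and is consistent with the paper's own appeal to stability of the semicubical cusp and of Morse double points.
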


Hence geodesics curves form an unfolding of the semicubical cusp with 
one parameter depending upon the initial heading angle $\alpha_0$,
see Fig.\ref{fig:versal-unfolding}.

\begin{figure}[ht]
  \centering
  \includegraphics[scale=0.6]{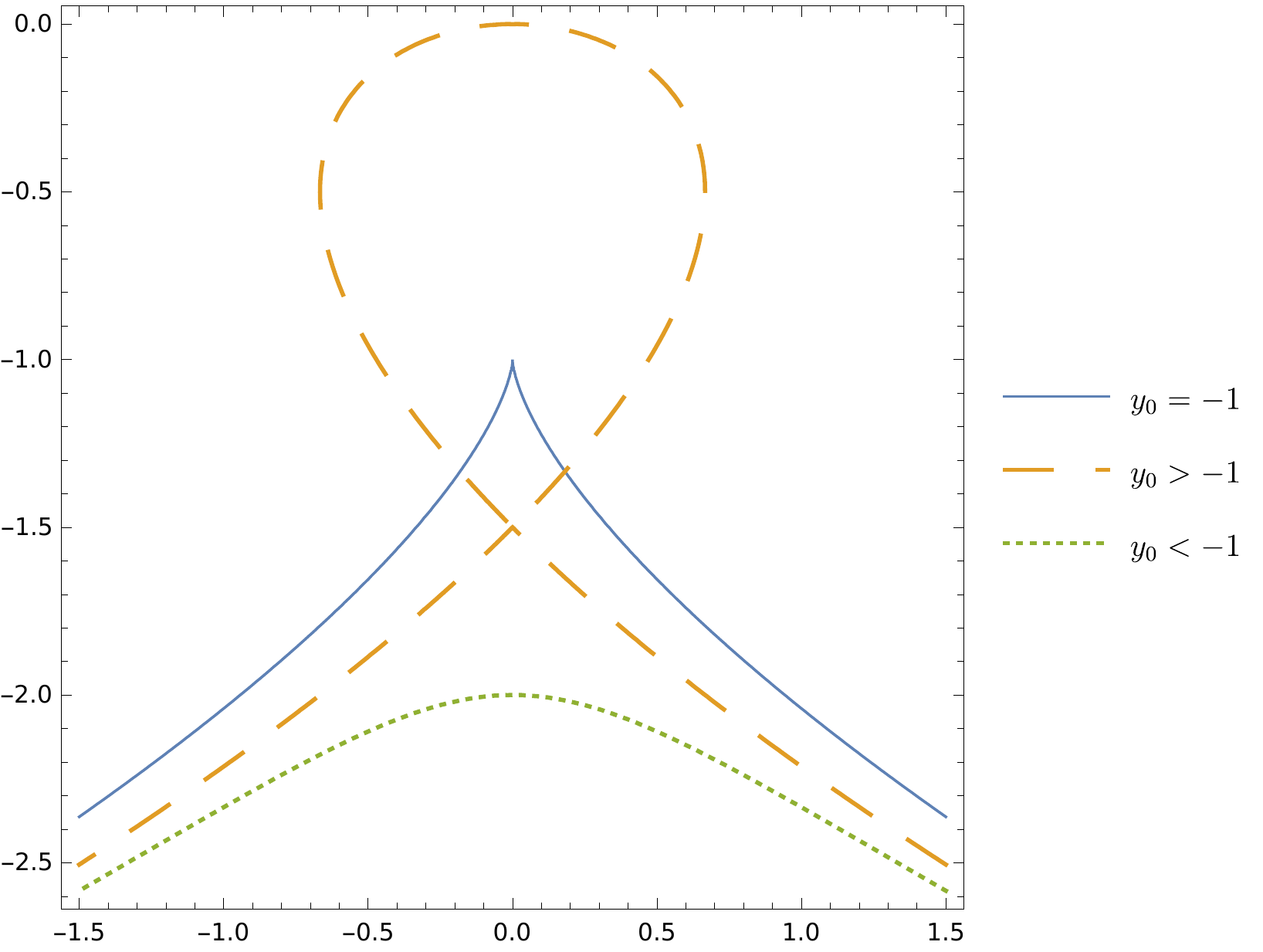}
  \caption{Quickest nautical path as a miniversal unfolding of the generic singularity
  of the abnormal geodesic.}
  \label{fig:versal-unfolding}
\end{figure}

\subsection{The analysis of the geodesics curves near the set $\|F_0\|_g=1$
and regularity of the time minimal value function.}

\begin{proposition}
  Let $(M,g,F_0)$ be a two dimensional Zermelo navigation problem and 
  $\tilde q_1=(q_1, \alpha_1)$ be a point 
  in the collinearity set $\mathcal{C}$.
  Assume that:
  \begin{itemize}
    \item 
      the $q$-projection of $\mathcal{C}$ is a regular curve at $q_1$ 
    \item the geodesic  $\sigma(\cdot)$ is not an immersion at $q_1$.
  \end{itemize}
  Consider $\tilde \sigma : t \mapsto \tilde \sigma(t)\coloneqq (\sigma(t), \alpha(t))$, 
  $t\in [t_1,0],\, t_1<0$ to be the geodesic 
  passing through $\tilde q_1$ at $t=0$ satisfying
  \begin{equation}
    \dot{\tilde \sigma} = X(\tilde \sigma) -\frac{D'(\tilde \sigma)}{D(\tilde \sigma)}Y(\tilde \sigma),
    \label{eq:sigmatilde}
  \end{equation}
  where 
  $X = F_0+ \cos \alpha F_1  + \sin \alpha F_2$, $Y=\frac{\partial}{\partial \alpha}$, 
  $D = \det(Y,[Y,X],[[Y,X],Y])$ and $D' = \det(Y,[Y,X],[[Y,X],X])$.\\
  Then we have the two cases:
  \begin{enumerate}
    \item 
      $\dot \alpha(0) \neq 0$: $\sigma$ has a semicubical cusp at $q_1$.
    \item 
      $\dot \alpha(0) =0$: $\tilde q_1$ is a singular point with a spectrum 
      $\{0, \pm \lambda\}$ or $\{0, \pm i\, \gamma\}$.
  \end{enumerate}
  \label{prop:cusp}
\end{proposition}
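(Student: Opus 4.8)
The plan is to split along the hypothesis on $\dot\alpha(0)$ after first extracting the common geometric content of the collinearity condition. Since $\tilde q_1\in\mathcal C$, Proposition \ref{prop:collinear-set} gives $X(\tilde q_1)=F_0(q_1)+\cos\alpha_1 F_1(q_1)+\sin\alpha_1 F_2(q_1)=0$; writing $W_1:=\cos\alpha_1 F_1(q_1)+\sin\alpha_1 F_2(q_1)$ and $e_1:=-\sin\alpha_1 F_1(q_1)+\cos\alpha_1 F_2(q_1)$ this reads $F_0(q_1)=-W_1$, so $\|F_0\|_g=1$ and $(W_1,e_1)$ is a $g$-orthonormal frame at $q_1$. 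The $q$-part of \eqref{eq:sigmatilde} is $\dot\sigma=V(\sigma,\alpha)$ with $V:=F_0+\cos\alpha F_1+\sin\alpha F_2$, so the non-immersion hypothesis is exactly $\dot\sigma(0)=V(q_1,\alpha_1)=0$. Using the regularity of the $q$-projection of $\mathcal C$ together with the action of the pseudo-group $G$, I would place the problem in the polar semi-normal form $g=\dd r^2+m(r)^2\dd\theta^2$, $F_1=\partial/\partial r$, $F_2=m^{-1}\partial/\partial\theta$, in which \eqref{eq:dottildeq} is governed by the determinants of Lemma \ref{lem:Ds} and the historical model \eqref{eq:zermelo-model-app} is the prototype.

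For case 1, I would Taylor expand $\sigma$ at $t=0$ directly from $\dot\sigma=V(\sigma,\alpha)$. Because $\dot\sigma(0)=0$ and $\partial_\alpha V=e_1$, differentiating once gives $\ddot\sigma(0)=\dot\alpha(0)\,e_1$, which is nonzero precisely because $\dot\alpha(0)\neq0$; this fixes $p=2$ and identifies the cusp tangent with the unit direction $e_1$ (the ``vertical tangent''). Differentiating a second time and again using $\dot\sigma(0)=0$ together with the collinearity identity $F_0(q_1)=-W_1$, all terms proportional to $\dot\sigma(0)$ drop out and the component of $\sigma^{(3)}(0)$ transverse to $e_1$ collapses to $\langle\sigma^{(3)}(0),W_1\rangle=-2\,\dot\alpha(0)^2$. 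This is nonzero in case 1, so $\sigma^{(3)}(0)$ and $\ddot\sigma(0)$ are independent and, by Definition \ref{def:cusp}, $\sigma$ has a semicubical cusp of order $(2,3)$ at $q_1$. On the model \eqref{eq:zermelo-model-app} this reads $\ddot\sigma(0)=(0,-1)$ and $\sigma^{(3)}(0)=(-2,0)$, recovering Proposition \ref{prop:rnode}.

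For case 2, $\dot\alpha(0)=u_s(\tilde q_1)=-D'(\tilde q_1)/D(\tilde q_1)=0$ forces $D'(\tilde q_1)=0$; combined with $X(\tilde q_1)=0$ this makes $\tilde q_1$ an equilibrium of the singular field $X_s=X-(D'/D)Y$, so the claim becomes a computation of $J:=\mathrm{Jac}(X_s)(\tilde q_1)$. In the semi-normal coordinates $(r,\theta,\alpha)$ I would exhibit two structural facts at $\tilde q_1$. First, the direction tangent to $\mathcal C$ (the cyclic coordinate $\theta$ in the revolution model) produces a zero eigenvalue, the relevant row and column of $J$ degenerating through $\cos\alpha_1=0$ and $D'(\tilde q_1)=0$. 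Second, the complementary $2\times2$ block has vanishing diagonal, since $\partial_r\dot r=\partial_r\cos\alpha=0$ and $\partial_\alpha u_s=0$ at $\cos\alpha_1=0$, hence vanishing trace; a trace-free $2\times2$ block with off-diagonal entries $a,b$ has eigenvalues $\pm\sqrt{ab}$. Thus the spectrum of $J$ is $\{0,\pm\lambda\}$ when $ab>0$ and $\{0,\pm i\gamma\}$ when $ab<0$, as asserted.

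The main obstacle is case 2, specifically justifying that exactly one eigenvalue vanishes ($\det J=0$) and that the complementary block is trace-free in full generality rather than only in the revolution/historical model. The zero eigenvalue is equivalent to the tangency of $\mathcal C=\{V=0\}$ with the surface $\{u_s=0\}=\{D'=0\}$ at $\tilde q_1$, and the trace-free condition to the simultaneous criticality $\partial_\alpha u_s=0$ at $\cos\alpha_1=0$; both become transparent once the metric is put in the adapted polar form, where $D,D',D''$ are given by Lemma \ref{lem:Ds}. I would therefore reduce the general problem to this semi-normal form using the covariance of the singular flow under the feedback pseudo-group (Theorem \ref{thm:diagram} and Proposition \ref{prop:singular-control}), after which both cases reduce to finite, explicit verifications.
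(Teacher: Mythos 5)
Your overall architecture (split on $\dot\alpha(0)$, a $3$-jet computation of $\sigma$ for the cusp, a spectral analysis of the Jacobian of $X_s$ at the equilibrium) is the same as the paper's, and your case~1 conclusion $\ddot\sigma(0)=\dot\alpha(0)e_1$, $\langle\sigma^{(3)}(0),W_1\rangle=-2\dot\alpha(0)^2$ agrees with the paper's jet \eqref{eq:jetsigmatilde} with $\delta=\dot\alpha(0)$. But the step you lean on to make everything a ``finite, explicit verification'' --- reducing the general problem to the polar semi-normal form $g=\dd r^2+m(r)^2\dd\theta^2$, $F_0=\mu(r)\partial/\partial\theta$ of Lemma \ref{lem:Ds} --- is not available. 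Covariance under the feedback pseudo-group (Theorem \ref{thm:diagram}, Proposition \ref{prop:singular-control}) only lets you compute in convenient coordinates; it does not let you replace a general pair $(g,F_0)$ by a rotationally symmetric one. The revolution-with-parallel-current model carries a one-parameter symmetry and the Clairaut integral $p_\theta$ (Proposition \ref{prop:quadratures}); equivalently, $\|F_0\|_g$ and the Gaussian curvature of $g$ are both functions of $r$ alone, hence functionally dependent, which fails for a generic $(M,g,F_0)$. The paper instead works in isothermal coordinates and normalizes only finitely many jet coefficients ($F_0(q_1)=-\partial/\partial x$, $a_{00}=1$, $a_{10}=2b_{10}$, tangent of $\mathcal{C}$ horizontal, $\alpha_1=0$), keeping $a(x,y),b(x,y),c(x,y)$ general.

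This is not a cosmetic loss of generality: the paper's answer in case~2 is $\lambda=c_{01}^2-\kappa_2$, where $c_{01}$ is the transverse derivative of the transverse component of the current, a quantity that vanishes identically in any parallel-current revolution model ($c\equiv 0$). Consequently your structural claims for case~2 (zero eigenvalue produced by the cyclic coordinate $\theta$, trace-free $2\times 2$ block via $\partial_r\cos\alpha=0$ and $\partial_\alpha u_s=0$ at $\cos\alpha_1=0$) are verified only on the symmetric model and are not justified for a general Zermelo problem; what must actually be shown in general is $\det J=0$ and $\operatorname{tr}J=0$ for the full Jacobian, which the paper obtains by direct expansion of $D,D'$ in the general semi-normal form \eqref{eq:DDp}. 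The same issue touches case~1: the identity that makes the $W_1$-component of $\sigma^{(3)}(0)$ equal to $-2\dot\alpha(0)^2$ (rather than $\dot\alpha(0)\bigl(C-\dot\alpha(0)\bigr)$ for some unrelated constant $C$, which could vanish) requires relating $\langle D_qV\cdot e_1,W_1\rangle$ to $u_s(\tilde q_1)=-D'(\tilde q_1)/D(\tilde q_1)$; you assert the collapse but do not derive it, and it is exactly where the expansions of $D$ and $D'$ in the general normal form enter. To repair the proof, keep your two-case skeleton but carry out the computations in a normalization that fixes only the $0$- and $1$-jet data, as the paper does.
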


\begin{proof}
  {\it Normalization.}
  The problem is local in a neighbourhood of $\tilde q_1$ and 
  it is enough to show the proposition for $q_1=0$.
  We can choose a coordinate system $(x,y)$ to normalize, at the point $q_1$, 
  the vector field $F_0$ along the direction $-\frac{\partial}{\partial x}$ i.e.
  we take two smooth functions $b$ et $c$ such that
  \[
    F_0(x,y) = b(x,y)\,\frac{\partial}{\partial x}+c(x,y)\,\frac{\partial}{\partial y},
  \]
  with $b(0,0)=-1$ and $c(0,0)=0$. 
  Then a frame $(F_1,F_2)$, orthonormal with respect to the metric $g$ 
  set in the isothermal form  $g(x,y)=a(x,y)\, \left( \dd x^2+\dd y^2 \right)$,
  can be taken in such way that $F_0$ and $F_1$ has opposite direction at $q_1$: 
  \[
    (F_1,F_2)=\left(\frac{1}{\sqrt{a(x,y)}}\, \frac{\partial}{\partial x},
    \frac{1}{\sqrt{a(x,y)}}\, \frac{\partial}{\partial y}\right),
  \]
  where $a$ is a smooth positive function. \\
  In a neighbourhood of $q_1$ we write
  $a(x,y)=\sum_{1\le i,j\le k} a_{ij}\, x^i y^j + \varepsilon_1(x,y)$,
  $b(x,y)=\sum_{1\le i,j\le k} b_{ij}\, x^i y^j + \varepsilon_2(x,y)$,
  $c(x,y)=\sum_{1\le i,j\le k} c_{ij}\, x^i y^j + \varepsilon_3(x,y)$,
  where $\varepsilon_1, \varepsilon_2, \varepsilon_3$ are terms of order higher
  than $k$.
  
  The projection of the collinearity set  is 
  \begin{equation*}
    \begin{aligned}
      \{\|F_0\|_g=1\} = \{
        a_{00}-1 + &(a_{10} - 2 b_{10}) x + (a_{01} - 2 b_{01}) y +
        \left(c_{01}^2-\kappa_2 \right)\, y^2\\
        &+
        2(c_{01} c_{10}-\kappa_1)\, x y
        + \left(c_{10}^2-\kappa_2'\right)\, x^2 
        + o(|x,y|^2) = 0
      \},
      \label{eq:colset}
    \end{aligned}
  \end{equation*}
  and is regular near $q_1$ and its tangent at $q_1$ can be normalized 
  to the horizontal line $y=0$ with $a_{00}=1$,  $a_{10} = 2 b_{10}$ and
  the constants $\kappa_1=-a_{11}/2+3 b_{01} b_{10}+ b_{11}$ 
  $\kappa_2=-a_{02}+3 b_{01}^2+2 b_{02}$,
  and
  $\kappa_2'=-a_{20}+3 b_{10}^2+2 b_{20}$
  will have some importance in the
  sequel.
  Due to the normalization of $F_0$ and $F_1$, $\alpha_1$ has to be equal to $0$.

  {\it Computation.}
  The geodesic $\sigma(\cdot)$ is not an immersion at $q_1$ since
  $\dot \sigma(0)=F_0(q_1)+\cos \alpha_1 F_1(q_1) + \sin \alpha_1 F_2(q_1)=0$ and
  denoting by $p(\cdot)$
  the corresponding adjoint vector, we have
  $\bm{M}(q_1,p(0))=p(0)\cdot \dot \sigma(0)=0$ and $\sigma(\cdot)$
  is an abnormal geodesic.

  Integrating \eqref{eq:sigmatilde}, $\tilde \sigma(\cdot)$ can be parameterized as
  \begin{equation}
    \left\{
      \begin{array}{l}
        \sigma(t)=\left(-\frac{1}{3} t^3 \delta^2+o(t^3),\frac{1}{2} t^2 \delta+o(t^2)\right) 
        \\
        \alpha(t) = t \delta + o(t)
      \end{array},
    \right.
    \label{eq:jetsigmatilde}
  \end{equation}
  where $\delta \coloneqq a_{01}/2 - b_{01}$. 
  
  The expansions at $\tilde q_1=0$ of the  determinants 
  $D(\tilde q)$ and $D'(\tilde q)$ are 
  \begin{equation}
    \begin{aligned}
      D(\tilde q) &= 1 -2 b_{01} y-2 b_{10} x +o(x,y, \alpha),
      \\
      D'(\tilde q) &=   
      y \left(4 a_{01} \delta-3 \delta^2+\kappa_2\right)+x (5
      b_{10} \delta+\kappa_1)+c_{01} z-\delta
      + o(x,y, \alpha).
      \label{eq:DDp}
    \end{aligned}
  \end{equation}

%
%

  {\it Case $\dot \alpha(0)\neq 0$.}
    
  In this case $a_{01}/2- b_{01}\neq 0$ and from \eqref{eq:jetsigmatilde} $\sigma(\cdot)$ 
  has a semicubical cusp at $q_1$.

  {\it Case $\dot \alpha(0)=0$.}
  $\tilde q_1$ is a singular point of the system 
  $\dot{\tilde q} = X_s(\tilde q)$.
  From \eqref{eq:DDp}, for $\delta=0$, $\tilde \sigma \equiv 0$ is the integral curve 
  of $X_s$ (Lipschitz) with $\tilde \sigma(0)=0$, therefore $\tilde \sigma$ is reduced to $0$.  
  The characteristic polynomial $\chi$ of the Jacobian matrix 
  $\frac{\partial X_s}{\partial \tilde q}$ evaluated at $\tilde q_1$ is 
  \[
    \chi(s) = s\,\left(\lambda - s^2\right),\quad 
    \lambda=c_{01}^2 -\kappa_2,
  \]
  hence the spectrum is $\{0, \pm \sqrt{\lambda}\}$ if $\lambda\ge 0$ or $\{0, \pm i \sqrt{-\lambda}\}$ otherwise.
\end{proof}

\begin{figure}[htpb]
  \centering
  \includegraphics[width=0.55\columnwidth]{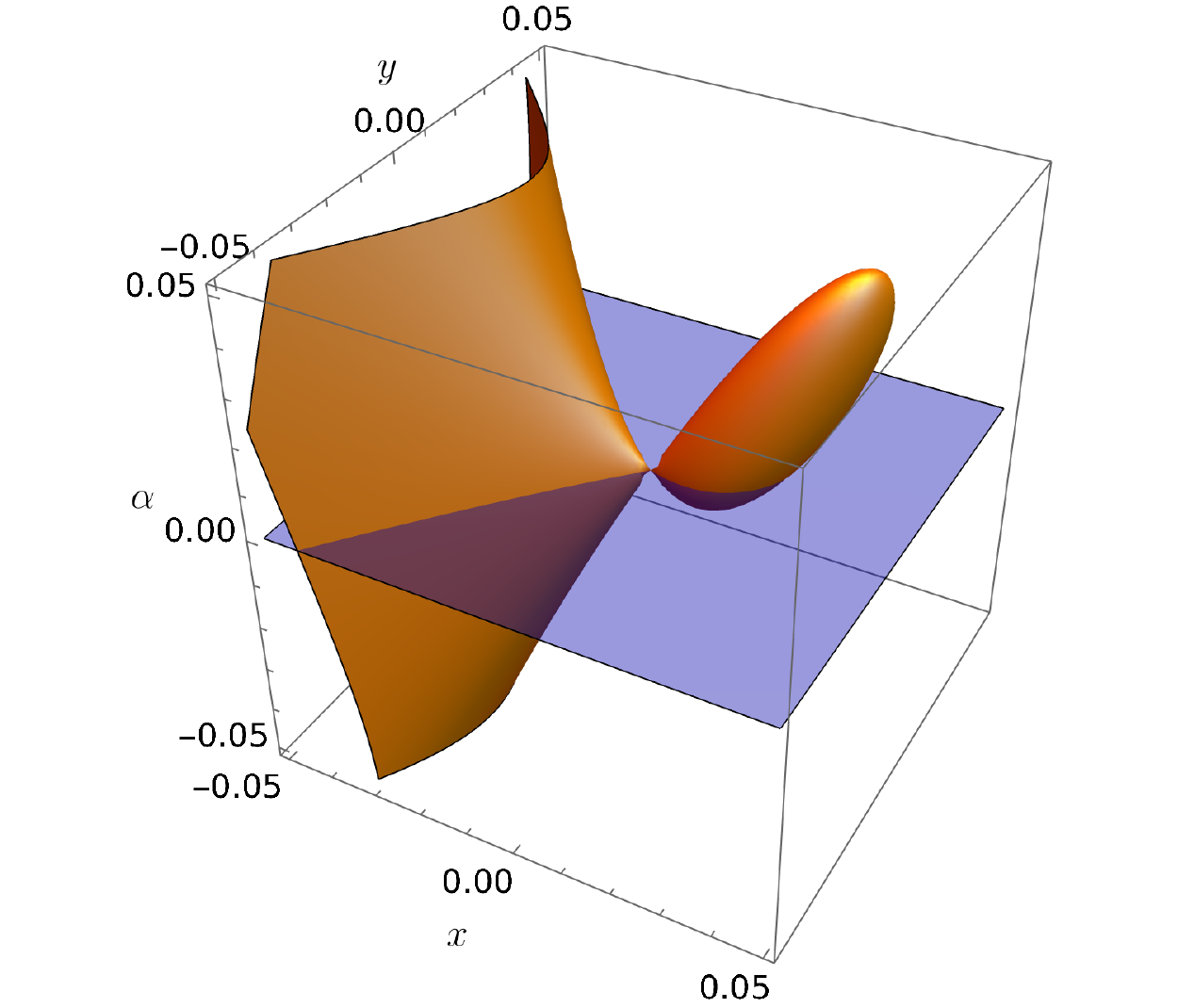}
  \includegraphics[width=0.40\columnwidth]{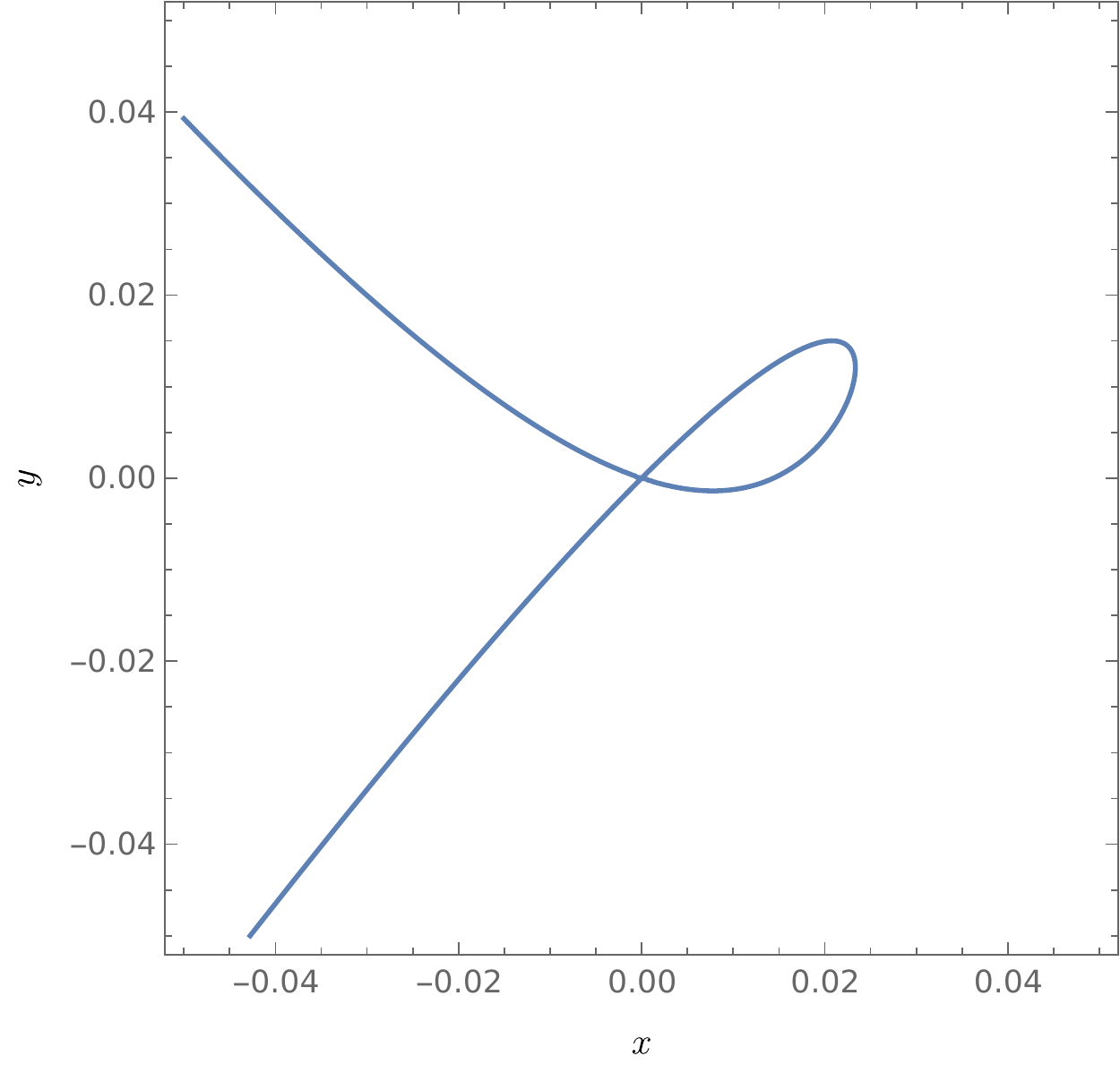}
  \caption{{\it (left)} Surface $D''(\tilde q)=0$ for the semi-normal form constructed in the 
  proof of Proposition \ref{prop:cusp} for $\delta=0$ and random values for the 
  coefficients $a_{ij},b_{ij},c_{ij}$. {\it (right)} Intersection of $D''=0$ with $\alpha=0$.}
  \label{fig:ddpp}
\end{figure}

\begin{remark}
\begin{itemize}
\item 
The spectrum has resonance and in the general case, this leads to moduli
in the classification.
But since the vector field is 
geodesic, one has a foliation related to the set $D''=0$, 
while $DD''>0$ and $DD''<0$ 
so that we expect a complete
classification of the geodesic flow.

\item
Using the semi-normal form constructed in Proposition \ref{prop:cusp},
  the expansion at $\tilde q_1=0$ of the  determinant
  $D''(\tilde q)$ up to order $2$ is 
  \begin{equation*}
    \begin{aligned}
      D''(\tilde q) &=  
      \frac{1}{2} \Big(y \left(\delta (4 a_{01} y+8 b_{10} x-2)-3
        \delta^2 y+2 \kappa_1 x+\kappa_2 y\right)
        \\
        &\hspace{4.5cm}+2 z (c_{01}
      y+c_{10} x)+\kappa_2' x^2+z^2\Big)
      +o(|x,y, \alpha|^2),
    \end{aligned}
  \end{equation*}
   and for $\dot \alpha=0$ the surface $D''(\tilde q)=0$
   is not regular at $\tilde q_1$ (see Fig. \ref{fig:ddpp}).
   \end{itemize}
\end{remark}

The following theorem describes the optimality properties of the abnormal
and hyperbolic geodesics.
\begin{theorem}
  Let $\tilde q_1=(q_1, \alpha_1)$ such that $q_1$ is a semicubical cusp at $t=0$ 
  for the abnormal geodesic $\sigma_a(\cdot)$.
  There exists an neighbourhood $V$ of $q_1$, a point $q_0$ in $V\cap \sigma_a(\cdot)$ in which  we have:
  \begin{enumerate}
    \item 
      The abnormal arc is optimal up from $q_0$ to the cusp point included.
    \item 
      Self-intersecting geodesics starting from $q_0$ in a conic
      neighbourhood of $\alpha_1$ are optimal up to the intersection
      point $q_1$ with the abnormal.
    \item The value function $T:q_f\mapsto T(q_0,q_f)$ is discontinuous
      for each $q_f\neq q_1$ on the abnormal geodesic $\sigma_a(\cdot)$.
  \end{enumerate}
  \label{thm:value-function}
\end{theorem}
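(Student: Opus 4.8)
The plan is to carry out the whole analysis in the semi-normal form coordinates of Proposition \ref{prop:cusp}, in which $\mathcal{C}$ is normalized to the line $\{y=0\}$, the cusp is $q_1=0$ with $\alpha_1=0$, and the strong current domain $\|F_0\|_g>1$ is one side of $\mathcal{C}$. The central object is the one-parameter family of geodesics issued from a fixed $q_0\in V\cap\sigma_a$ in the strong current domain, parameterized by the initial heading $\alpha_0$ in a conic neighbourhood of $\alpha_1$. By Proposition \ref{prop:rnode} this family realizes the miniversal unfolding of the semicubical cusp of Fig.~\ref{fig:versal-unfolding}, with the abnormal ($\bm{M}=0$) as the cusp, the hyperbolic geodesics ($\bm{M}=1$) as the self-intersecting $\mathbb{R}$-nodes, and the elliptic ones ($\bm{M}=-1$) as the $\mathbb{C}$-nodes. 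I would first construct the local time-minimal synthesis from this family and then read off (1)--(3).

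For (1), I would first record that the abnormal arc is a singular extremal lying in the candidate-minimizing set $DD''\ge 0$ of Proposition \ref{prop:high-order-pmp}, so it satisfies the generalized Legendre--Clebsch condition. Extremality being insufficient, I would prove genuine optimality by an accessibility argument: in the strong current domain the admissible velocity set $F_0+\{u_1F_1+u_2F_2:\ u_1^2+u_2^2\le 1\}$ is a proper convex cone whose two edges are exactly the abnormal directions $\bm{M}=0$, so the two abnormal geodesics issued from $q_0$ bound the accessibility set $A(q_0,s)$. The key step is to exhibit a separating (support) function certifying that $q_1\notin A(q_0,s)$ for every $s$ strictly below the travel time along $\sigma_a$; combined with the fact that $\sigma_a$ traces this boundary, this yields optimality up to and, by continuity of $s\mapsto A(q_0,s)$, including the cusp.

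For (2), the self-intersecting geodesics are the hyperbolic $\mathbb{R}$-nodes of the unfolding. I would set up the central field by showing that the endpoint map $(\alpha_0,t)\mapsto\sigma_{\alpha_0}(t)$ is a local diffeomorphism off the singular locus of the unfolding, so that the hyperbolic geodesics from $q_0$ sweep a full neighbourhood and each is optimal up to its first cut point. It then remains to locate the cut locus: comparing the arrival times of the two branches of an $\mathbb{R}$-node meeting at a double point, equivalently of two distinct headings reaching the same terminal point, I would show that the first cut along each hyperbolic arc is precisely its intersection $q_1$ with the separatrix $\sigma_a$, which gives optimality up to $q_1$.

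Finally, for (3), I would establish that $\sigma_a$ is a semipermeable barrier: by the cone structure of (1), trajectories issued from $q_0$ may approach $\sigma_a$ from the reachable side but cannot cross to the far side without detouring toward the cusp. Using (1) and (2) I would then compute the one-sided limits of $q_f\mapsto T(q_0,q_f)$ at a point $\sigma_a(t)$ with $t_0<t<0$: from the reachable side the limit equals the along-$\sigma_a$ time $t-t_0$, whereas from the far side the infimal path must round the barrier and the limit is strictly larger, so $T$ jumps at every $q_f\ne q_1$ on $\sigma_a$ (the two sides merging only at the cusp $q_1$, which is why that point is excluded). I expect the main obstacle to be the optimality analysis at the cusp itself, where $\sigma_a$ is not an immersion and the usual field-of-extremals and conjugate-point machinery, which presupposes immersed geodesics, breaks down; controlling the competing arrival times uniformly up to this non-immersion point, and in particular constructing the separating function that excludes $q_1$ from $A(q_0,s)$ for sub-optimal $s$, is the technical crux on which the synthesis rests.
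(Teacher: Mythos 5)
Your overall strategy (accessibility cone bounded by the abnormal directions, field of hyperbolic extremals, barrier argument for the jump of $T$) is a genuinely different route from the paper, which instead works entirely by explicit Taylor expansion in the semi-normal form of Proposition~\ref{prop:cusp}: it expands $\tilde\sigma_a(t_2,\tilde q_1)$ and $\tilde\sigma_h(t_1,\tilde q_2')$ to order $3$, solves $\sigma_a(t_0,\tilde q_1)=\sigma_h(t_1,\tilde q_2')$ to get
$t_1=t_2-t_0-2\sqrt{t_0^2+t_0t_2+t_2^2}$ and $\alpha_2'=\tfrac{\delta}{2}\,\tfrac{t_0^2-t_2^2-t_1^2}{t_1}$,
and then reads off all three assertions from the single inequality $-t_1-t_2\ge -t_0$. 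That computation is not decoration: it is the entire quantitative content of the theorem, since it identifies which hyperbolic geodesic from $q_0$ reaches a given point $q_2=\sigma_a(t_2)$ of the abnormal and shows its arrival time exceeds the abnormal time by the explicit amount $2(\sqrt{(t_0-t_2)^2+3t_0t_2}+t_0-t_2)$, which is the jump of $T$ across $\sigma_a$.

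The gap in your proposal is that precisely this content is deferred to steps you only announce. First, the separating (support) functional certifying $q_1\notin A(q_0,s)$ cannot be a fixed covector: the admissible velocity set $F_0+\{u_1F_1+u_2F_2:\,\|u\|\le1\}$ is a proper cone only where $\|F_0\|_g>1$, and it degenerates to a half-plane exactly on the collinear set $\|F_0\|_g=1$ where the cusp point $q_1$ sits; so the convex-cone barrier argument fails at the one point where you need it, and you acknowledge this is the crux without resolving it. Second, for items (2) and (3) you assert that the first cut point of each hyperbolic geodesic is its intersection with $\sigma_a$ and that the one-sided limit of $T$ from the far side is strictly larger, but both claims are time comparisons between distinct extremals reaching the same point, and no mechanism is given to perform them; near the non-immersion point the usual field-of-extremals and conjugate-point tools are unavailable, as you note. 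Without either the explicit expansions (the paper's choice) or a substitute comparison lemma, the inequality that makes $T$ discontinuous is never established, so the argument as written does not close.
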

\begin{proof}
  For $\xi$ in the extented state space, we use the notation
  $\sigma_h(\cdot, \xi)$ (resp. $\sigma_a(\cdot,\xi)$) for the projection
  on the $q$-space of the hyperbolic (resp. exceptional) extremal $\tilde \sigma(\cdot,\xi)$ passing through $\xi$ at time $0$.
  The point $\tilde q_1$ can be identified to $0$ and we use the same 
  normalization as in the proof of Proposition \ref{prop:cusp}, 
  which leads to consider the semi-normal form 
  \begin{align*}
    &F_0(x,y) = b(x,y)\,\frac{\partial}{\partial x}+c(x,y)\,\frac{\partial}{\partial y},
    \\
    &F_1= \frac{1}{\sqrt{a(x,y)}}\, \frac{\partial}{\partial x},
    \quad F_2=\frac{1}{\sqrt{a(x,y)}}\, \frac{\partial}{\partial y},
  \end{align*}
  with
  $a(x,y)=\sum_{1\le i,j\le k} a_{ij}\, x^i y^j + \varepsilon_1(x,y)$,
  $b(x,y)=\sum_{1\le i,j\le k} b_{ij}\, x^i y^j + \varepsilon_2(x,y)$,
  $c(x,y)=\sum_{1\le i,j\le k} c_{ij}\, x^i y^j + \varepsilon_3(x,y)$
  where $\varepsilon_1, \varepsilon_2, \varepsilon_3$ are terms of order higher
  than $k$,
  $a_{00}=1$, $b_{00}=-1$, $c_{00}=0$, $a_{10}=2\, b_{10}$
  and $a_{01}-b_{01}\neq 0$.

  The proof goes as follows. In relation with Fig.\ref{fig:value-fct},
  we define from $\tilde q_1$ the points $q_0$ and $q_2$ on the abnormal geodesics 
  as $\sigma_a(t_0, \tilde q_1)$ and $\sigma_a(t_2, \tilde q_1)$ 
  respectively for some given $t_0<t_2<0$.
  In the extended space we take $\tilde q_2'=(q_2, \alpha_2')$ and a time 
  $t_1<0$ such that $q_0$ is reached from $q_2$ by a hyperbolic geodesics in 
  time $t_1$.
  The following computation aims to express
  $\alpha_2'$ and $t_1$ in terms of $t_0, t_2$.

  More precisely, 
  for small nonpositive times $t_0,t_1,t_2$ and small angle $\alpha_2'$,
  we expand 
  $q_2\coloneqq \sigma_a(t_2,\tilde q_1)$ and 
  $\sigma_h(t_1,\tilde q_2')$ up to order $3$ and we obtain
  \begin{align*}
    \tilde \sigma_a(t_2&,\tilde q_1)\!=\! 
    \Big(-\frac{\delta^2}{3} t_2^3,\frac{\delta
      }{2} t_2^2,\frac{\delta}{12} t_2 \big(-5 a_{01}
        \delta t_2^2+2 c_{01}^2 t_2^2-6 c_{01} t_2+4
        c_{10} \delta t_2^2&&
        \\
        & \hspace{7cm}+2 \delta^2 t_2^2-2 \kappa_2
    t_2^2+12\big)\Big) +o(t_2^3),&&
    \\
    \tilde \sigma_h(t_1&,\tilde q_2')\!=\!\Big(
      \frac{1}{2} \left(2 \alpha_2' t_1+\delta t_1^2+\delta t_2^2\right),
      \frac{1}{6} \big(-3 \alpha_2'^2 t_{1}-\!6 \alpha_2'
        \delta t_{1}^2-2 \delta^2 t_{1}^3-\!3 \delta^2 t_{1}
      t_2^2-2 \delta^2 t_2^3\big),
      \\
      &\frac{1}{2} \left(2
        \alpha_2' t_{1}+\delta t_{1}^2+\delta
      t_2^2\right)+\frac{1}{2} c_{01} \delta t_{1}
      t_2^2,\frac{1}{12} \big(3 a_{01} \alpha_2'^2 t_{1}
        -15a_{01} \alpha_2' \delta t_{1}^2-5 a_{01} \delta^2
        t_{1}^3
        \\
        &-18 a_{01} \delta^2 t_{1} t_2^2+12
        \alpha_2'^2 c_{10} t_{1}-12 \alpha_2'^2 \delta t_{1}
        +6\alpha_2' c_{01}^2 t_{1}^2+12 \alpha_2' c_{10} \delta
        t_{1}^2+6 \alpha_2' \delta^2 t_{1}^2
        \\
        &-6 \alpha_2' \kappa_2
        t_{1}^2+2 c_{01}^2 \delta t_{1}^3+4 c_{10}
        \delta^2 t_{1}^3
        +2 \delta^3 t_{1}^3+18 \delta^3
        t_{1} t_2^2-2 \delta \kappa_2 t_{1}^3-6 \delta
      \kappa_2 t_{1} t_2^2\big)
      \\
      &-\alpha_2' c_{01}
      t_{1}+\alpha_2'-\frac{1}{2} c_{01} \delta
      t_{1}^2+\delta t_{1}
    \Big) + o(|t_1,t_2, \alpha_2'|^3).
  \end{align*}
  Computing, the equation \[q_0\coloneqq \sigma_a(t_0,\tilde q_1)
  =\sigma_h(t_1,\tilde q_2')\]
  is satisfied up to order $2$ in $t_0,t_1,t_2, \alpha_2'$, for
  \begin{equation*}
    \begin{aligned}
      &t_1 = t_2-t_0-2\sqrt{t_0^2+t_0t_2+t_2^2},
      \\ 
      &\alpha_2' = \frac{\delta}{2} \frac{t_0^2-t_2^2-t_1^2}{t_1},
    \end{aligned}
    \label{eq:q0-tp}
  \end{equation*}
  since $t_0<t_2$.

  Finally, we compare the cost of the abnormal geodesic from $q_0$ to $q_1$,
  which is $-t_0>0$ and the cost $-t_1-t_2$ of the concatenation of the
  hyperbolic geodesic from $q_0$ to $q_2$
  and the abnormal arc from $q_2$ to $q_1$ as
  follows:
  \begin{equation*}
    \begin{aligned}
      -t_1-t_2 &=
      2 \left( \sqrt{(t_0-t_2)^2+3t_2 t_0}+t_0-t_2\right)-t_0
      \\
      &\ge
      2 \left( |t_0-t_2|+t_0-t_2\right)-t_0
      \\
      &\ge -t_0.
    \end{aligned}
  \end{equation*}
  This shows that the value function $q_f\mapsto T(q_0,q_f)$ is  discontinuous if $q_f$
  is on the abnormal geodesic and $q_f\neq q_1$.
\end{proof}

\begin{figure}
  \centering
  \includegraphics[scale=0.5]{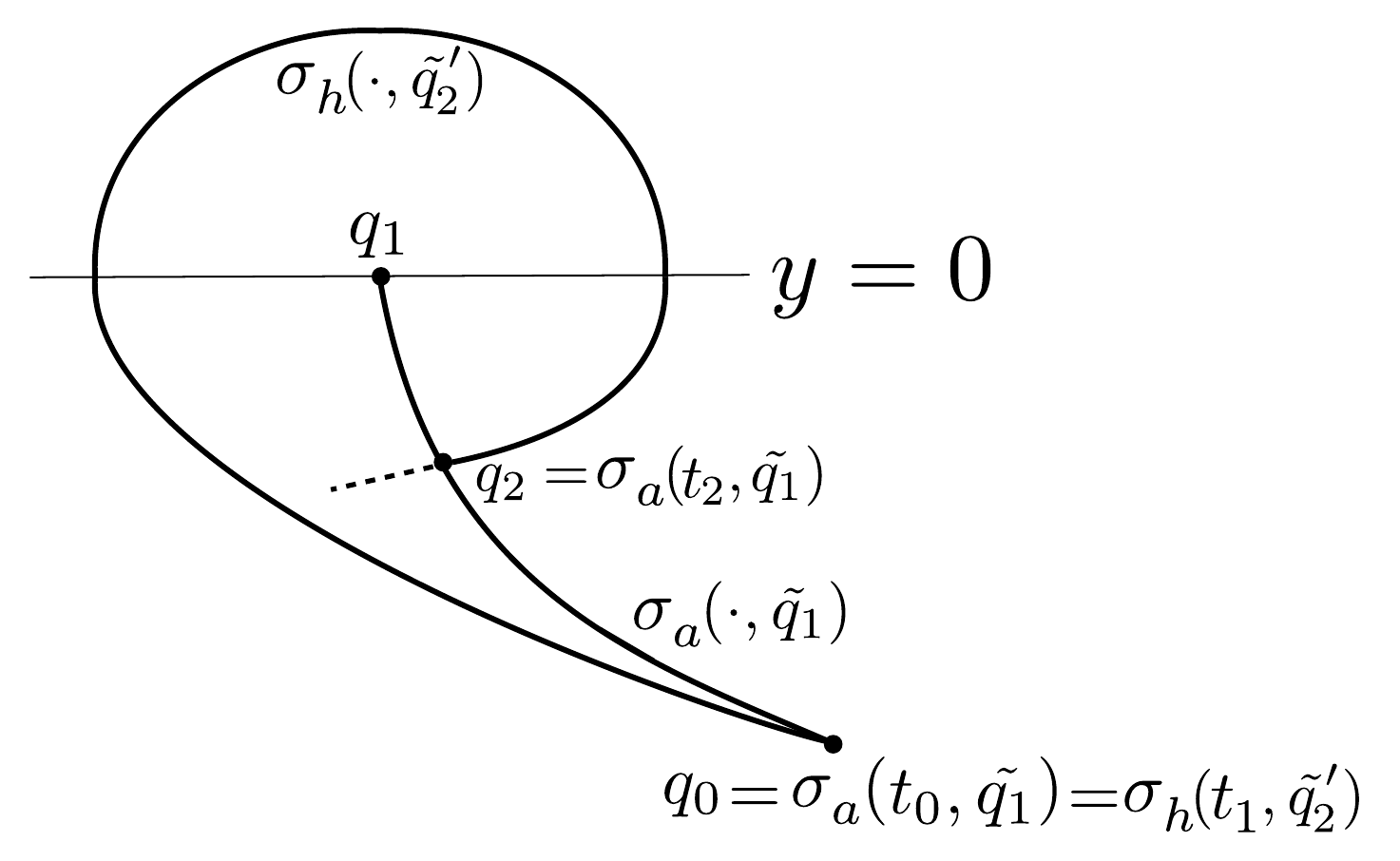}
  \caption{Hyperbolic and abnormal geodesics in a neighbourhood
  of the collinearity set.}
  \label{fig:value-fct}
\end{figure}

\section{Time minimal exceptional geodesics in optimization of chemical networks}
\subsection{A brief recap about the optimal control of chemical networks}

In this section, we introduce the concepts for the optimization of chemical 
networks, see \cite{bonnard2019}.
In particular we shall consider the McKeithan network:
$
\tikz[baseline={(a.base)},node distance=6mm] {
  \node (a) {$T+M$};
  \node[right=of a] (b) {$A$};
  \node[right=of b] (c) {$B$};
  \draw (a.base east) edge[left,->] node[above] {{$\scriptstyle k_1$}} (b.base west) (b.base east) edge[->] node[above] {{$\scriptstyle k_2$}} (c.base west);
  \draw (a) edge[bend right=40,<-] node[below right] {{$\scriptstyle k_3$}} (b);
  \draw (a.south) edge[bend right=60,<-] node[below right] {{$\scriptstyle k_4$}} (c);
}
$.
The state space is formed by the concentration vector:
\[
  c = (c_T,c_M,c_A,c_B)
\]
of the respective chemical species.
We note $\delta_1=c_T+c_A+c_B$, $\delta_2=c_M+c_A+c_B$ the first integrals 
associated to the 
dynamics and let $x=c_A$, $y=c_B$, then the system is described by the equation:
\begin{align*}
  &\dot x = -\beta_2 x v^{\alpha_2}-\beta_3 x v^{\alpha_3}-\delta_3 v\, (x+y)
  +\delta_4 v+v\, (x+y)^2
  \\
  &\dot y = \beta_2 x v^{\alpha_2}-\beta_4 y v^{\alpha_4}
  \\
  &\dot v = u, \qquad |u|\le 1,
\end{align*}
with
\begin{align*}
  0\le x\le \delta_1, \ 0\le y\le \delta_2, \
  \delta_3=\delta_1+\delta_2, \ \delta_4=\delta_1 \delta_2,
\end{align*} 
the Arrhenius law gives 
$k_i = A_i \exp(-E_i/RT),\ i=1,2,3,4$ 
($E_i$ is the activation energy, $T$ is the temperature, $A_i,R$ are
constant)
and 
\[
  v=k_1,\ k_2 = \beta_2 v^{\alpha_2}, \ k_3 = \beta_3 v^{\alpha_3},
  k_4 = \beta_4 v^{\alpha_4}.
\]
Maximizing the production of the $A$ species leads to a time minimal
control problem with a terminal manifold $N:x=d$, $d$ being the
desired production of $A$. 

We denote by $\dot q = X+u\, Y$, $|u|\le 1$ the control system.

The singular geodesics are solutions of the dynamics 
\[
  \dot q = X+ u_s\, Y,\quad u_s=-D'/D.
\]
Each optimal solution is a concatenation by arcs $\sigma_+, \sigma_-$, 
where the control is $u=\pm 1$ and singular arcs $\sigma_s$.
In this case, the complexity of the surface $D,D',D''$ 
 contrasts with those of the Zermelo navigation problem given in Lemma \ref{lem:Ds}
 and we handle here this complexity by the use of different {\it semi-normal forms},
 constructed for the action of 
the pseudo--group $\mathcal{G}$ of local diffeomorphisms 
$\varphi$ such that $\varphi(0)=0$, $\varphi * Y=Y$ and 
feedback  transformation $u\rightarrow-u$ (so that $\sigma_+$ and $\sigma_-$ 
can be exchanged).

\subsection{General concepts and notation}

We consider a local neighborhood $U$ of $q_0\in N$.
If the optimal control $u^*(v)\in [-1,1]$ exists and is unique, it is regular
on an open subset of $U$, union of $U_+$ where $u^*(v)=1$ and $U_-$ where 
$u^*(v)=-1$. 
The surface $S$ which separates $U_+$ from $U_-$
is subanalytic and can be stratified into
\begin{itemize}
  \item a switching locus $W$: closure of the set of points where $u^*$
    is regular and not continuous. 
    We denote by $W_+$ (resp. $W_-$) the points of $W$ where the optimal
    control is $+1$ (resp. $-1$) on $N$,
  \item a cut-locus $C$: closure of the set of points where a trajectory 
    loses its optimality,
  \item singular locus $\Gamma_s$: union of optimal singular trajectories.
\end{itemize}
and these strata can be approximated by semialgebraic sets using 
semi-normal forms.

\subsection{Local syntheses in the exceptional cases}

Take a terminal point $q_1$ of $N$, which can be identified to $0$. 
One wants to describe the time minimal syntheses in a small neighborhood
$U$ of $0$. 
We denote respectively by $\sigma_\pm^0$, $\sigma_s^0$ bang and singular arcs 
terminating at $0$ and we consider only the exceptional case where the arc is 
tangent to $N$, which splits into the bang exceptional case or the 
singular exceptional case.
The syntheses are described in details in \cite{bonnard1997,launay1997} up to
the codimension two situations and we recall the main points to be applied
to the McKeithan network.

\subsubsection{The bang exceptional case}

The neighborhood $U$ of $0$ can be split into two domains denoted by
$U_+$ on which the optimal control is $u=+1$ and the $U_-$ where it is given 
by $u=-1$.
We have to consider the two cases.\\

\paragraph{\bf Generic case (codimension one)}

In this case both arcs $\sigma_+^0$ and $\sigma_-^0$ arc tangent to $N$ but with 
a contact of order $2$.
Using the concept of unfolding, one can define a $C^0$-foliation of $U$ by invariant 
planes so that in each plane the system takes the semi-normal form:
\begin{equation}
  \begin{aligned}
    &\dot x = by + o(|x|,|y|)
    \\
    &\dot y = X_2(x,y)+u, \qquad |u|\le 1,
  \end{aligned}
  \label{mod:codim1}
\end{equation}
where $b=n\cdot [Y,X]\neq 0$, which can be normalized to $1$, $n=(1,0)$ being
the normal to $N$ identified to $x=0$.
Moreover one can assume that $1+X_2(0)>0$ and we have two cases.
\begin{proposition}
  Using the previous normalizations we have two cases described in 
  Fig.\ref{fig:generic_case}.
  \label{prop:generic_case}
\end{proposition}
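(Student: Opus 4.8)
The plan is to work entirely inside one leaf of the $C^0$-foliation, where the dynamics is the planar semi-normal form \eqref{mod:codim1} with $b$ normalized to $1$, the terminal manifold $N$ identified with $\{x=0\}$, and $1+X_2(0)>0$. The two extreme controls $u=\pm1$ produce the bang fields $X\pm Y$, whose trajectories are, to leading order, parabolas: integrating $\dot x=y$, $\dot y=X_2(x,y)\pm1$ from the origin gives $y(t)=(X_2(0)\pm1)\,t+o(t)$ and $x(t)=\tfrac12(X_2(0)\pm1)\,t^2+o(t^2)$. First I would record that both terminal arcs $\sigma_\pm^0$ are tangent to $N$ at $0$: the drift $X(0)=X_2(0)\,\partial/\partial y$ lies along $N$, so $\dot q(t_f)=(0,\,X_2(0)\pm1)$ points along $N$. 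This is exactly the exceptional (tangency) hypothesis, and since the transversality condition \eqref{eq:transversality} forces $p=(p_x,0)$ with $H_Y(t_f)=p_y=0$, it makes $\bm{M}=H_X(0,p)=0$ along the whole extremal.

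The dichotomy of the proposition should then fall out of a single second-order computation. Differentiating $\dot x=by+o$ twice along each bang field and evaluating at $0$ yields the transverse acceleration $\ddot x=b\,(X_2(0)\pm1)$. For $\sigma_+^0$ this equals $1+X_2(0)>0$, so the contact with $N$ has order exactly $2$ and the arc curves to the $x>0$ side; for $\sigma_-^0$ it equals $X_2(0)-1=(1+X_2(0))-2$, whose sign is the only remaining invariant at this order. Hence there are exactly two generic configurations: the case $X_2(0)>1$, where $\sigma_+^0$ and $\sigma_-^0$ curve to the same side of $N$, and the case $-1<X_2(0)<1$, where they curve to opposite sides. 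These are the two pictures of Figure \ref{fig:generic_case}.

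Next I would build the local synthesis in each case. The switching function is $\Phi(t)=H_Y(z(t))=p\cdot Y$, which vanishes exactly where the bang control may change, and by Hamilton's equations $\dot\Phi=\{H_Y,H_X\}=H_{[Y,X]}$. At the terminal point $\dot\Phi=p_x\,\big(n\cdot[Y,X]\big)=p_x\,b\neq0$, so $\Phi$ has only simple zeros near $0$: the extremals are bang-bang with at most one switch in a sufficiently small neighbourhood $U$, and the sign of $\dot\Phi$ selects which of $\sigma_\pm^0$ is the terminal arc. I would then integrate the bang fields backward from $N$, locate the switching locus $W$ where $\Phi=0$, and concatenate $\sigma_+$ and $\sigma_-$ arcs (with their single admissible switch) to fill $U$ by a field of non-intersecting candidate trajectories, obtaining the two synthesis pictures according to the side configuration found above.

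The main obstacle is not the extremal bookkeeping but proving that the constructed field is genuinely time-minimal and covers $U$ unambiguously. This requires comparing the transfer times of the competing concatenations reaching a given point near $N$ and ruling out trajectories carrying additional switches; as in the planar analysis behind Theorem \ref{thm:value-function}, the decisive fact is $b=n\cdot[Y,X]\neq0$, which both bounds the number of switches and makes $W$ a regular curve transverse to the flow, so that a Boltyanskii-type verification argument closes the optimality of the synthesis in each of the two cases.
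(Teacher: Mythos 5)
Your reconstruction is sound, and it is worth noting that the paper itself offers no proof of this proposition: it is stated as a recall of the classification in \cite{bonnard1997,launay1997}, so the only material to compare against is the surrounding normalization and the accessibility discussion that follows the statement. Your computation matches that material exactly. With $b$ normalized to $1$ the bang arcs through $0$ satisfy $x(t)=\tfrac12\,(X_2(0)\pm 1)\,t^2+o(t^2)$, so $\sigma_+^0$ always lies in $\{x>0\}$ while the side of $\sigma_-^0$ is governed by the sign of $X_2(0)-1$; this is precisely the dichotomy $X_2(0)\gtrless 1$ of Fig.~\ref{fig:generic_case}, and the excluded value $X_2(0)=1$ is precisely the third-order contact of $\sigma_-^0$ treated in the codimension-two case that follows. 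The transversality/abnormality bookkeeping and the bound of one switch coming from $\dot\Phi=H_{[Y,X]}\neq 0$ near $0$ are likewise the standard ingredients of the cited syntheses.

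One point you should make explicit rather than fold into a generic Boltyanskii-type verification: in Case 1 the essential content of the picture is a \emph{non-accessibility} statement, not an optimality statement, and your own second-order computation delivers it in one line. For $X_2(0)>1$ one has $\ddot x=b\,(X_2+u)\geq X_2(0)-1+o(1)>0$ for \emph{every} admissible $u$, so $x$ is strictly convex along all trajectories of \eqref{mod:codim1} near $0$; consequently the points of $\{x>0\}$ lying on the far side of the parabola $\sigma_-^0$ (those with $y_0\geq 0$, or with $y_0<0$ and $x_0>y_0^2/\bigl(2(X_2(0)-1)\bigr)$) can never reach $N=\{x=0\}$, and there is no synthesis to verify on that region — $\sigma_-^0$ is the boundary of the backward-reachable set of $N$. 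In Case 2 the sign change of $\ddot x$ over $u\in[-1,1]$ restores local controllability, and the single-switch field you describe does cover $U$ with $U_+=\{x<0\}$ and $U_-=\{x>0\}$, as asserted in the paragraph following the proposition.
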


\begin{figure}
  \centering
  \begin{tabular}{cccc}
    \def\svgwidth{0.12\textwidth}
\begingroup%
  \makeatletter%
  \providecommand\color[2][]{%
    \errmessage{(Inkscape) Color is used for the text in Inkscape, but the package 'color.sty' is not loaded}%
    \renewcommand\color[2][]{}%
  }%
  \providecommand\transparent[1]{%
    \errmessage{(Inkscape) Transparency is used (non-zero) for the text in Inkscape, but the package 'transparent.sty' is not loaded}%
    \renewcommand\transparent[1]{}%
  }%
  \providecommand\rotatebox[2]{#2}%
  \newcommand*\fsize{\dimexpr\f@size pt\relax}%
  \newcommand*\lineheight[1]{\fontsize{\fsize}{#1\fsize}\selectfont}%
  \ifx\svgwidth\undefined%
    \setlength{\unitlength}{222.43803406bp}%
    \ifx\svgscale\undefined%
      \relax%
    \else%
      \setlength{\unitlength}{\unitlength * \real{\svgscale}}%
    \fi%
  \else%
    \setlength{\unitlength}{\svgwidth}%
  \fi%
  \global\let\svgwidth\undefined%
  \global\let\svgscale\undefined%
  \makeatother%
  \begin{picture}(1,1.67127755)%
    \lineheight{1}%
    \setlength\tabcolsep{0pt}%
    \put(0,0){\includegraphics[width=\unitlength,page=1]{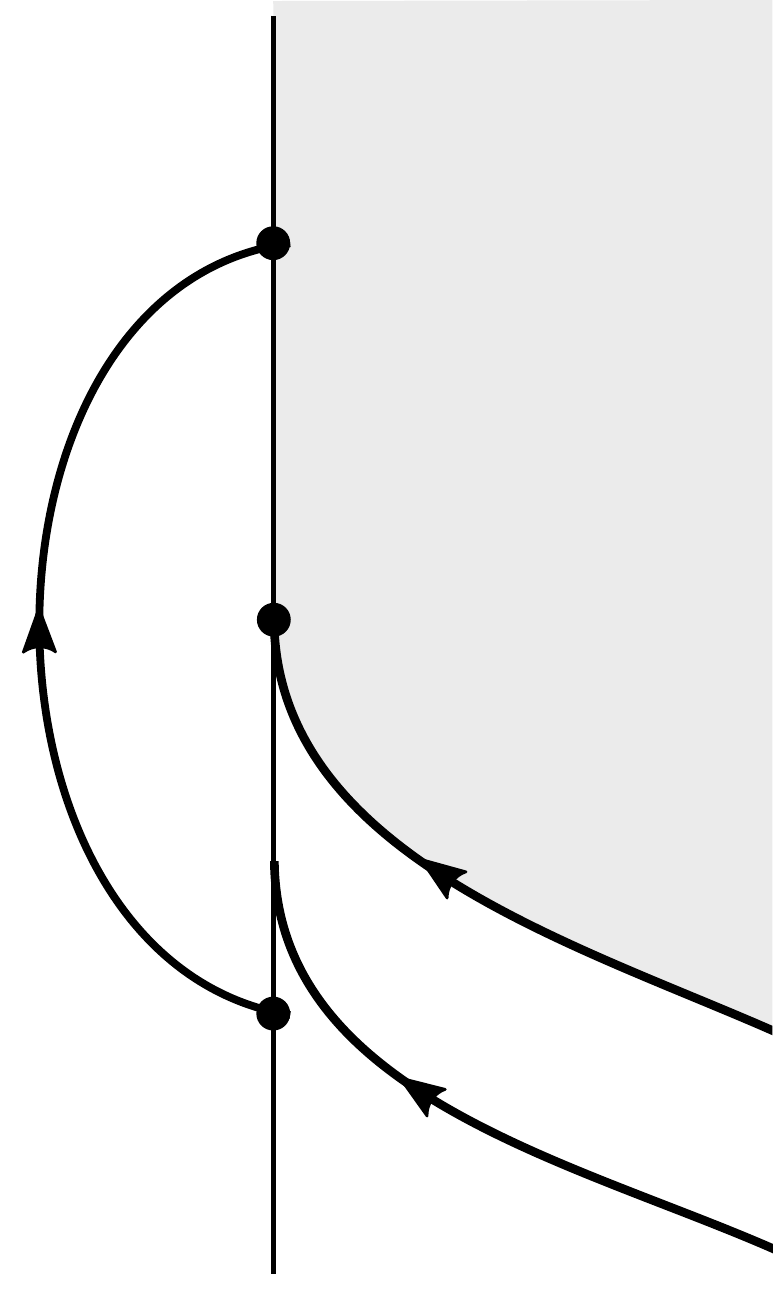}}%
    \put(0.11657536,1.53961183){\color[rgb]{0,0,0}\makebox(0,0)[lt]{\lineheight{1.25}\smash{\begin{tabular}[t]{l}$N$\end{tabular}}}}%
    \put(-0.25214551,0.80442393){\color[rgb]{0,0,0}\makebox(0,0)[lt]{\lineheight{1.25}\smash{\begin{tabular}[t]{l}$\sigma_+$\end{tabular}}}}%
    \put(0.60987423,0.55160235){\color[rgb]{0,0,0}\makebox(0,0)[lt]{\lineheight{1.25}\smash{\begin{tabular}[t]{l}$\sigma_-$\end{tabular}}}}%
    \put(0.58088481,0.21118766){\color[rgb]{0,0,0}\makebox(0,0)[lt]{\lineheight{1.25}\smash{\begin{tabular}[t]{l}$\sigma_-$\end{tabular}}}}%
    \put(0.38234673,0.8344701){\color[rgb]{0,0,0}\makebox(0,0)[lt]{\lineheight{1.25}\smash{\begin{tabular}[t]{l}$0$\end{tabular}}}}%
  \end{picture}%
\endgroup%
 & & &
    \def\svgwidth{0.1\textwidth}
\begingroup%
  \makeatletter%
  \providecommand\color[2][]{%
    \errmessage{(Inkscape) Color is used for the text in Inkscape, but the package 'color.sty' is not loaded}%
    \renewcommand\color[2][]{}%
  }%
  \providecommand\transparent[1]{%
    \errmessage{(Inkscape) Transparency is used (non-zero) for the text in Inkscape, but the package 'transparent.sty' is not loaded}%
    \renewcommand\transparent[1]{}%
  }%
  \providecommand\rotatebox[2]{#2}%
  \newcommand*\fsize{\dimexpr\f@size pt\relax}%
  \newcommand*\lineheight[1]{\fontsize{\fsize}{#1\fsize}\selectfont}%
  \ifx\svgwidth\undefined%
    \setlength{\unitlength}{156.44738866bp}%
    \ifx\svgscale\undefined%
      \relax%
    \else%
      \setlength{\unitlength}{\unitlength * \real{\svgscale}}%
    \fi%
  \else%
    \setlength{\unitlength}{\svgwidth}%
  \fi%
  \global\let\svgwidth\undefined%
  \global\let\svgscale\undefined%
  \makeatother%
  \begin{picture}(1,1.65133143)%
    \lineheight{1}%
    \setlength\tabcolsep{0pt}%
    \put(0,0){\includegraphics[width=\unitlength,page=1]{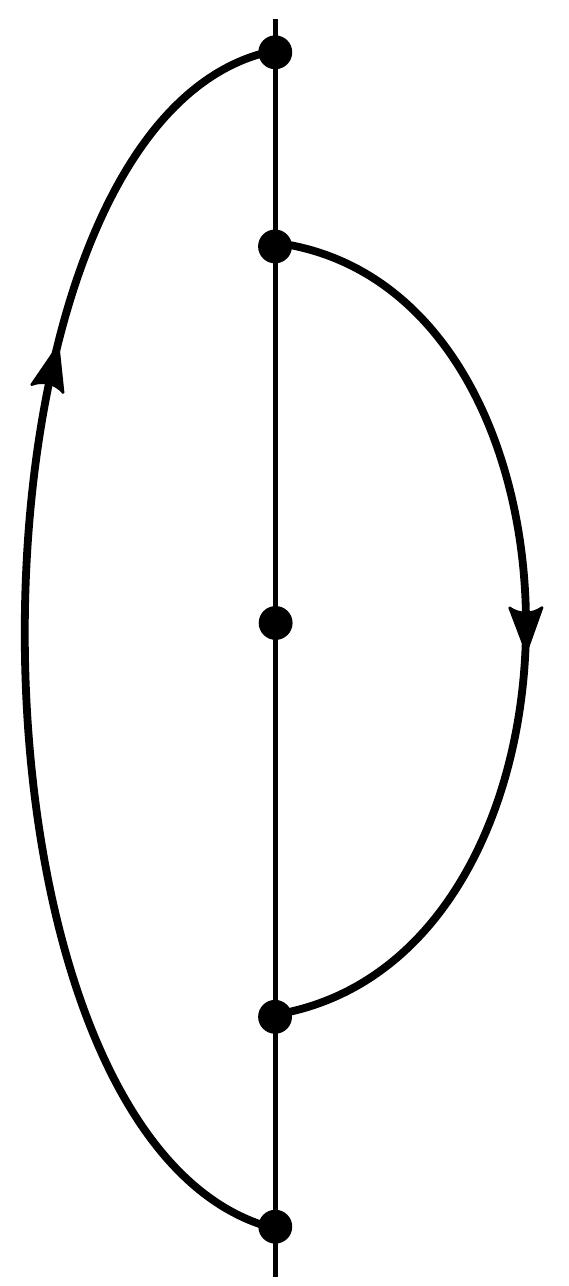}}%
    \put(0.26513544,0.9871651){\color[rgb]{0,0,0}\makebox(0,0)[lt]{\lineheight{1.25}\smash{\begin{tabular}[t]{l}{\small $0$}\end{tabular}}}}%
    \put(1.0017258,0.82679789){\color[rgb]{0,0,0}\makebox(0,0)[lt]{\lineheight{1.25}\smash{\begin{tabular}[t]{l}{\small $\sigma_-$}\end{tabular}}}}%
    \put(-0.3017258,1.62679789){\color[rgb]{0,0,0}\makebox(0,0)[lt]{\lineheight{1.25}\smash{\begin{tabular}[t]{l}{\small $\sigma_+$}\end{tabular}}}}%
    \put(0.50605313,1.93680358){\color[rgb]{0,0,0}\makebox(0,0)[lt]{\lineheight{1.25}\smash{\begin{tabular}[t]{l}$N$\end{tabular}}}}%
  \end{picture}%
\endgroup%

    \\
    Case 1: $X_2(0)>1$ & &  & Case 2: 	$X_2(0)<1$
  \end{tabular}
  \caption{Local synthesis near $\mathcal{E}$ in the generic case.\label{fig:generic_case}}
\end{figure}

The difference between the two cases is related to different accessibility 
properties of the system.
In the case $X_2(0)>1$, the target $N$ is not accessible from the points in $x>0$
above the arc $\sigma_-^0$ terminating at $0$.
In the case $X_2(0)<1$, each point of $U$ can be steered in minimum time 
to $N$, the domain $U_+$ where the optimal control is $+1$ being $x<0$ and the 
domain $U_-$ with optimal  control $-1$ being $x>0$.\\

\paragraph{\bf Codimension two case}
A more complex situation occurs assuming that the arc $\sigma_-^0$  has a contact
of order three with $N$ while $\sigma_+^0$ has a contact of order two.
The optimal syntheses cannot be described by foliating $N$ by 
$2d$-planes as in the previous cases.

One needs to introduce the following assumptions.
We assume that $Y=\frac{\partial}{\partial z}$, $N$ is the plane $x=0$ 
parameterized as the image of: $(w,s)\mapsto (0,w,s)$.
Denoting $n=(1,0,0)$, the normal to $N$ at $0$, we assume
\begin{itemize}
  \item bang exceptional case: $n\cdot X(0)=0$,
    $n\cdot [Y,X]\neq 0$,
  \item $\det(X,Y,[Y,X])\neq 0$ at $0$,
  \item $\{n\cdot X=0\} \cap N$ is a curve which is neither tangent to $X$ nor to
    $Y$ at $0$.
\end{itemize}

We introduce the following normalization: along the $y$-axis, $n\cdot X=0$
and $[X,Y]=\frac{\partial}{\partial x}$.

Using the concept of semi-normal form the optimal syntheses can be described by
the following model:
\begin{equation}
  \left\{
    \begin{array}{ll}
      \dot x = z
      \\
      \dot y =b, & N:(w,s)\mapsto (0,w,s)
      \\
      \dot z = 1+u+y
    \end{array},
  \right.
  \label{mod:codim2}
\end{equation}
and we have two types of time minimal syntheses.

\begin{proposition}
  Assume $b<0$. Then each point of $U$ can be steered to $N$.
  Moreover
  \begin{enumerate}
    \item $U^+\setminus N \subset \{x<0\}$ 
      and
      $U^-\setminus N \subset \{x>0\}$
    \item Optimal trajectories $\sigma_-$ arrive at any point 
      $(0,w,s<0)$ or $(0,w\ge 0,s)$ of $N\cap U$.
  \end{enumerate}
  The optimal synthesis is described by Fig.\ref{fig:codim2bn}.
  \label{prop:codim2bn}
\end{proposition}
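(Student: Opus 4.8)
The plan is to run the Maximum Principle directly on the model \eqref{mod:codim2} and read the entire synthesis off a single (affine) switching function. Writing $X=z\,\frac{\partial}{\partial x}+b\,\frac{\partial}{\partial y}+(1+y)\,\frac{\partial}{\partial z}$ and $Y=\frac{\partial}{\partial z}$, the Hamiltonian lift is $H=p_x z+p_y b+p_z(1+u+y)$, so the adjoint system is $\dot p_x=0$, $\dot p_y=-p_z$, $\dot p_z=-p_x$. Hence the switching function $\phi=H_Y=p_z$ is affine in $t$ and has at most one zero, so every extremal carries at most one switch. The transversality condition \eqref{eq:transversality} on $N=\{x=0\}$, whose tangent space is spanned by $\frac{\partial}{\partial y},\frac{\partial}{\partial z}$, forces $p_y(t_f)=p_z(t_f)=0$; since $p\neq 0$ this gives $p_x\neq 0$, and therefore $\phi(t)=p_x\,(t_f-t)$ vanishes only at $t_f$. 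I conclude that every BC-extremal reaching $N$ is a \emph{single} bang arc with constant control $u=\sign(p_x)$. This removes all concatenation questions and reduces the synthesis to comparing the two bang families $\sigma_+,\sigma_-$.

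Next I integrate the two bang fields explicitly: since $\dot y=b$ is constant we have $y(t)=y_0+bt$, and two quadratures give $z$ quadratic and $x$ cubic in $t$. To prove that every point of $U$ can be steered to $N$ I exploit the sign of $b$. For a point with $x_0<0$ I take $u=+1$, so $\dot z=2+y\ge 1$ on a small $U$; thus $z$ becomes positive in uniformly small time and then $\dot x=z>0$ drives $x$ up to $0$. For a point with $x_0>0$ I take $u=-1$, so $\dot z=y=y_0+bt$ becomes negative (this is where $b<0$ is essential); then $z$ turns negative and $\dot x=z<0$ brings $x$ down to $0$. In both cases the excursion remains inside $U$ once $U$ is small enough, which proves the first assertion. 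I will stress that the hypothesis $b<0$ is exactly what makes \emph{all} of $U$ accessible, in contrast with the opposite sign where an inaccessible region appears.

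To organize the synthesis I use the conserved quantity $\bm{M}=H\equiv -p^0\ge 0$. Evaluating it at $t_f$, where $p_y=p_z=0$, yields $\bm{M}=p_x\,s$ with $s=z(t_f)$, so in the ordinary case $\sign(p_x)=\sign(s)$: the arriving control is $+1$ on $\{s>0\}$ (family $\sigma_+$) and $-1$ on $\{s<0\}$ (family $\sigma_-$), while $\{s=0\}$ is the exceptional line through the abnormal terminal point $0$. Combining this with the backward expansion $x(t)\simeq s\,(t-t_f)$ shows that near arrival the $\sigma_+$ arcs sit in $\{x<0\}$ and the $\sigma_-$ arcs in $\{x>0\}$; since each optimal arc stops at its first meeting with $N$, the whole arc stays on one side. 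This already gives assertion $(1)$, namely $U^+\setminus N\subset\{x<0\}$ and $U^-\setminus N\subset\{x>0\}$, and the portion of $(2)$ stating that $\sigma_-$ arrives at every $(0,w,s<0)$.

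The remaining and delicate point is the exceptional line $\{s=0\}$: there $\bm{M}=p_x s=0$ gives no information on $\sign(p_x)$, so extremality alone does not decide which family is optimal, and one must compare the \emph{arrival times} of the competing $\sigma_+$ and $\sigma_-$ arcs near the abnormal point $0$. I would carry this out exactly as in the proof of Theorem \ref{thm:value-function}: expand both bang flows to order three in $t$ (the natural weights being $1,2,3$ for $y,z,x$), solve the matching equation $\sigma_+(t_1,\cdot)=\sigma_-(t_2,\cdot)$, and compare the two costs. This locates the cut locus and determines the split along $s=0$ at $w=0$, thereby completing the terminal partition claimed in $(2)$ and yielding the synthesis of Fig.\ref{fig:codim2bn}. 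The main obstacle is precisely this third-order time comparison on the exceptional line, that is, establishing genuine optimality rather than mere extremality and pinning down the cut locus at the abnormal point; everything else follows cleanly from the affine switching function and the single-bang structure.
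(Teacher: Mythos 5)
First, a point of comparison: the paper itself gives no proof of Proposition \ref{prop:codim2bn} --- it states the synthesis and defers entirely to \cite{launay1997} (and \cite{bonnard1997}) for the computations. So your proposal cannot be matched against an in-paper argument; it has to stand on its own. Its structural core is correct and genuinely clean: for the exact model \eqref{mod:codim2} the adjoint system gives $\dot p_x=0$, $\dot p_z=-p_x$, transversality on $N=\{x=0\}$ forces $p_y(t_f)=p_z(t_f)=0$ and hence $p_x\neq 0$, so the switching function $p_z(t)=p_x(t_f-t)$ never vanishes before $t_f$ and every BC-extremal is a single bang arc. The accessibility argument from $b<0$ and the sign rule $\bm{M}=p_x s\ge 0$ at ordinary arrival points are also correct. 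This is a legitimate and rather economical way to organize the synthesis.

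The genuine gap is your sentence ``this already gives assertion (1).'' It does not. The sign rule $\mathrm{sign}(p_x)=\mathrm{sign}(s)$ only constrains extremals arriving at \emph{ordinary} points $s\neq 0$ of $N$. At an exceptional arrival point $(0,w,0)$ one has $\bm{M}=p_x\cdot 0=0$, the extremal is abnormal, and $p_x$ may take either sign; in particular the backward $\sigma_+$ arc through $(0,w,0)$ satisfies $\dot z(t_f)=2+w>0$, hence $z<0$ and $x>0$ just before arrival, so it is a perfectly valid BC-extremal with $u=+1$ living in $\{x>0\}$. If such an arc were optimal, assertion (1) would fail. Excluding it requires exactly the third-order time comparison between the competing $\sigma_+$ and $\sigma_-$ arcs near the exceptional line that you defer to the end and never carry out --- the analogue of the cost comparison in the proof of Theorem \ref{thm:value-function}, and the content actually supplied by \cite{launay1997}. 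The same unexecuted comparison is what determines the terminal set $\{s<0\}\cup\{w\ge 0\}$ in assertion (2) and the curve $\mathfrak{C}_1$ of Fig.\ref{fig:codim2bn}. In short: your reduction to two single-bang families is right and disposes of everything away from the abnormal locus, but the proposition's substance \emph{is} the abnormal locus, and there the proof is only a plan, not an argument.
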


\begin{figure}
  \centering
  \def\svgwidth{0.78\textwidth}
  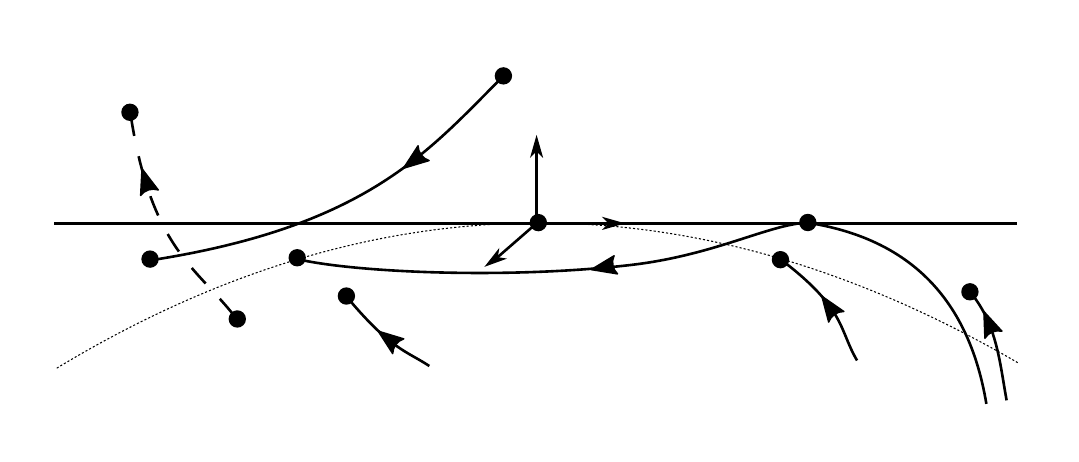
  \caption{Local synthesis near $\mathcal{E}$ in the codimension $2$ 
    case for $b<0$. 
    The dashed curves are in the region $x\le <0$.
  \label{fig:codim2bn}}
\end{figure}

\begin{proposition}
  Assume $b>0$. In this case the system is not locally controllable at $0$.
  We represent on Fig.\ref{fig:codim2bp} the synthesis in this case.
  \label{prop:codim2bp}
\end{proposition}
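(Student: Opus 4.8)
The plan is to exploit the special structure of the normal form \eqref{mod:codim2}: the $y$-component is \emph{uncontrolled and monotone}, and this freezes the signs of $\dot z$ and then of $\dot x$ throughout a half-space, obstructing access to the target $N=\{x=0\}$.

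First I would integrate the middle equation outright: since $\dot y=b$, every admissible trajectory satisfies $y(t)=y_0+bt$, independently of the control. The decisive remark is that, because $|u|\le 1$,
\[
  \dot z = 1+u+y \;\ge\; y \;=\; y_0+bt .
\]
Hence for $b>0$ and any initial point with $y_0\ge 0$ the right-hand side is nonnegative for all $t\ge 0$, so $z(\cdot)$ is nondecreasing \emph{for every control}. If moreover $z_0\ge 0$, then $z(t)\ge 0$ for all $t\ge 0$, and since $\dot x=z$ this forces $x(\cdot)$ to be nondecreasing as well. Starting from $x_0>0$ we obtain $x(t)\ge x_0>0$ for all $t\ge 0$, so the target is never reached. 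This already establishes the first assertion: the open cone $\{x_0>0,\ y_0\ge 0,\ z_0\ge 0\}$ meets every neighbourhood of $0$ and consists entirely of points that cannot be steered to $N$, in sharp contrast with the case $b<0$ treated in Proposition \ref{prop:codim2bn}. To make the non-accessible region precise I would note that, by a Fubini computation, $x(T)=x_0+z_0T+\int_0^T u(s)(T-s)\,\mathrm ds+\int_0^T\!\!\int_0^t y(s)\,\mathrm ds\,\mathrm dt$, so $x(T)$ is minimized for each $T$ by the bang control $u\equiv-1$; the separating surface between controllable and non-controllable points is therefore the locus where the resulting minimal value $\min_{t\ge 0}x^{u=-1}(t)$ vanishes, which for $b>0$ is reached only on an initial interval where $y_0+bt<0$.

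For the synthesis itself I would then follow the same scheme as in the proof of Proposition \ref{prop:codim2bn}, applying the Maximum Principle to \eqref{mod:codim2}: the switching function is $H_Y=p_z$, its successive derivatives along extremals produce the bang and singular structure, and comparing the costs of the arcs $\sigma_+^0$, $\sigma_-^0$ tangent to $N$ (with the prescribed contact orders two and three) yields the cut locus and the region swept by the optimal $\sigma_-$ arcs; the resulting picture is the one displayed in Fig.\ref{fig:codim2bp}.

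The monotonicity argument settles the controllability statement immediately, so the genuine difficulty lies in the synthesis part: determining the exact boundary of the accessible set and the cut locus near the exceptional point. For $b>0$ the loss of local controllability means the synthesis can no longer be obtained by the simple $C^0$-foliation of $U$ by invariant planes used in the generic codimension-one situation; instead one must glue the extremal arcs across the non-accessible cone isolated above and track how the boundary surface $\min_{t\ge0}x^{u=-1}(t)=0$ bounds the domain that can actually be steered to $N$.
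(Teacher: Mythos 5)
The paper offers no proof of this proposition at all: it simply displays Fig.~\ref{fig:codim2bp} and defers to \cite{launay1997} for ``the full details of the computation and description of the syntheses.'' Your monotonicity argument therefore supplies something the paper omits, and it is correct: from $\dot y=b>0$ and $1+u\ge 0$ you get $\dot z\ge y_0+bt$, hence $z$ nondecreasing on $\{y_0\ge 0\}$, hence $x$ nondecreasing on $\{y_0\ge 0,\ z_0\ge 0\}$, so every point of $\{x_0>0,\ y_0\ge 0,\ z_0\ge 0\}$ (which meets every neighbourhood of the origin, though as written it is a cone with nonempty interior rather than an open cone) can never be steered to $N=\{x=0\}$; this is exactly the failure of local controllability asserted in the statement and pictured in the caption of Fig.~\ref{fig:codim2bp}, and it contrasts cleanly with Proposition~\ref{prop:codim2bn}. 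Two small points. First, your Duhamel formula drops the term $T^2/2$ coming from the constant $1$ in $\dot z=1+u+y$; since that term is control-independent it does not affect the conclusion that $u\equiv-1$ minimizes $x(T)$, but the formula as written is not an identity. Second, the synthesis half of the statement (the precise stratification shown in the figure, the boundary surface $\min_{t\ge 0}x^{u=-1}(t)=0$, the cut locus) is only sketched in your proposal; that is no worse than the paper itself, which relegates it to \cite{launay1997}, but you should be explicit that this part is a programme rather than a proof.
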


\begin{figure}
  \centering
  \def\svgwidth{0.78\textwidth}
  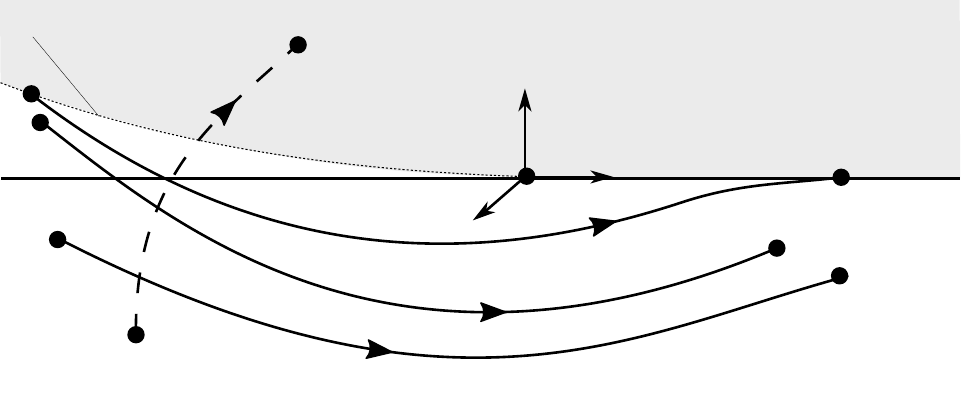
  \caption{Local synthesis near $\mathcal{E}$ in the codimension 2 case for $b>0$.\label{fig:codim2bp}}
\end{figure}

We shall refer to \cite{launay1997} for the full details of the 
computation and description of the syntheses.

\subsubsection{The singular exceptional case}

In this case we can assume $Y=\frac{\partial}{\partial z}$, $N$ is the plane $x=0$,
the normal to $N$ at $0$ is $n=(1,0,0)$ and moreover:

\paragraph{\it Singular exceptional case}
\begin{equation*}
  \left\{
    \begin{array}{l}
      n\cdot X(0)=0
      \\
      n\cdot [Y,X](0)=0
    \end{array}
  \right.
\end{equation*}
and we add the following generic conditions 
\begin{itemize}
  \item    $X \text{ and } Y \text{ are independent at } 0$,
  \item   $\det(Y,[Y,X],\text{ad}^2Y\cdot X)\neq 0 \text{ at } 0$,
  \item  $\{n\cdot X(0)=0\} \cap N \text{ is a curve which is not tangent to } X 
    \text{ at } 0$.
\end{itemize}

Using the concept of semi-normal form the optimal syntheses can be described by the
following 
\begin{equation}
  \left\{
    \begin{array}{l}
      \dot x = y + z^2
      \\
      \dot y =b + b_1\, z
      \\
      \dot z = c+u
    \end{array}.
  \right.
  \label{mod:codim2sing}
\end{equation}
The different syntheses are described in \cite{launay1997} and we present
hereafter a method of computing the time-minimal synthesis for
\eqref{mod:codim2sing} using symbolic computations.

In this model, the exceptional locus $\mathcal{E}\cap N$ is approximated by the
parabola: $w+s^2=0$ and we denote by $\mathcal{E}_-: \{q\in N, \ n\cdot X(q)<0\}$
and $\mathcal{E}_+ =\{q\in N,\ n\cdot X(q)>0\}$.

We have six cases that we can classify using the model.
\begin{itemize}
  \item{\it Case 1:} $b>0,\ u_s(0)>3$.
  \item{\it Case 2:} $b>0,\ 1<u_s(0)<3$.
  \item{\it Case 3:} $b>0,\ 0<u_s(0)<1$.
  \item{\it Case 4:} $b<0,\ u_s(0)>3$.
  \item{\it Case 5:} $b<0,\ 1<u_s(0)<3$.
  \item{\it Case 6:} $b<0,\ 0\le u_s(0)< 1$.
\end{itemize}

This describes the complete classification under generic assumptions 
and the stratification of $S$ can be computed in the original coordinates
and applied to the McKeithan network.
We illustrate the method in the Case $3$ correcting the results obtained
in \cite{launay1997}.

\paragraph*{\bf Illustration of the method based on symbolic computation.}

We present an algorithm to compute an approximation of the surface $S$
in the codimension $2$ case, more specifically we treat the Case 3 
described above and given by the model \eqref{mod:codim2sing}
with $b>0$ and such that the singular trajectory arriving at $0$ is not saturating,
that is $0 \le b_1/2-c < 1$.

\paragraph*{\it Method.}
The following steps involve symbolic computation to obtain the optimal
policy based on \cite{kupka1987}.

1. \ 
Take $q(0)=(0,w,s)\in N\cap U$.
We first determine the stratification of the surface $N$.
Since $Y$ is tangent to $N$, $q(0)$ is a switching point.
If $q(0)$ is an ordinary switching point, the optimal control is regular:
$u(0)=\text{sign}(p(0)\cdot [Y,X](0))$.
If it is a fold point, the optimal control may be singular and the optimal
policy is determined using \cite{bonnard1993}. 
Note that since $u_s(0)=b_1/2-c<1$, 
the singular trajectories are admissible and are either hyperbolic
or elliptic,
which corresponds respectively to time minimizing or time maximizing trajectories.

2.\ 
We then integrate the system backward in time from $q(0)$ 
and compute the equations characterizing the switching surface, the splitting locus $C_s$ and the singular locus $\Gamma_s$.

A trajectory 
$\sigma_{\varepsilon}, \varepsilon\in \{-1,1\}$ 
can switch at time $t_1^{\varepsilon}<0$,
can intersect the surface $N$ at time $t_2^{\varepsilon}<0$ or 
there may exist a time $t_3<0$ and a point $q_3\in V$ 
such that $\exp(-t_3(X+Y))(q_3)=q(0)$, 
$\exp(-t_3(X-Y))(q_3)\in N$.

The weights of the variable $t,s,w$ is respectively $1,2,1$.
We develop the regular flow using Taylor expansion  up to order
$3$ in $t$, we obtain :
\begin{equation}
  \begin{array}{llcl}
    &p_3(t_1^{\varepsilon}) = 0 &\Rightarrow 
    &t_1^{\varepsilon} = \frac{2s}{b_1/2-c-\varepsilon} + \dots,
    \\
    &e^{t_2^{\varepsilon}(X+\varepsilon Y)}(q(0)) \in N
    &\Rightarrow 
    &t_2^{\varepsilon} = \frac{-2}{b} \left(w+s^2  \right) + \dots,
    \\
    & \exists (t_3,q)\in \mathbb{R}_-^*\times U, \ \left\{
      \begin{array}{l}
        e^{-t_3(X+Y)}(q)\in N\\
        e^{-t_3(X-Y)}(q)\in N
      \end{array}
    \right.
	& \Rightarrow
    &t_3= \frac{3s}{b_1/2-c-3} + \dots
  \end{array}
  \label{eq:tis}
\end{equation}
and a parameterization of 
\begin{itemize}
  \item the switching surface $W_-=(x(w,s),y(w,s),z(w,s))$ is 
\end{itemize}
\begin{small}
  \begin{equation}
    \begin{aligned}
      &x(w,s) = \frac{s^2 \left(8 b_1^2+\nu_1^2\right)+3 w \nu_1^2+6 b s \nu_1}{3\nu_1^3/4s} 
       + \dots,
      \\
      &y(w,s) = \frac{4 s \left(b \nu_1 +b_1^2 s\right)}{\nu_1^2}+w +\dots ,
      \\
      &z(w,s) = \frac{s (b_1+2 (c+\varepsilon))}{\nu_1} +\dots,
    \end{aligned}
  \end{equation}
  where $\nu_1 = b_1-2 (c+\varepsilon)$.
  \end{small}
\begin{itemize}
  \item the singular surface $\Gamma_s\coloneqq \Gamma_s(t,w)$ is
    \begin{equation}
      \Gamma_s = \left(\frac{b t^2}{2}+\frac{b_1^2 t^3}{6}+t w,b
      t+\frac{b_1^2 t^2}{4}+w,\frac{b_1 t}{2}\right) +\dots,
    \end{equation}
  \item and the splitting locus $C_s=(x(w,s),y(w,s),z(w,s))$ is
\end{itemize}
  \begin{equation}
    \begin{aligned}
      &x(w,s) =\frac{2
        s^2 \left(b_1 c+b_1 (2 b_1-9)+2c^2+6\right)+w \nu_2^2}{\nu_2^3/6s} +\frac{3 b s \nu_2}{\nu_2^^3/6s} +\dots,
      \\
      &y(w,s) =\frac{6 s (b \nu_2+b_1 s
      (b_1+c-3))}{\nu_2^2}+w+\dots,
      \\
      &z(w,s) =\frac{s(b_1+4 c)}{\nu_2}+\dots.
    \end{aligned}
  \end{equation}
  where $\nu_2=b_1-2 c-6$.

3. \
The optimal policy is deduced by computing $t^*=\max(t_1^{\varepsilon},t_2^{\varepsilon},t_3)$ with
$\varepsilon=\text{sign}(s\, (n\cdot X(0,w,s)))$
and we represent
in Fig.\ref{fig:amax} the region of $N$ where $t^*(w,s)$ corresponds to a switching time, a splitting time or a time at which
the trajectory intersects $N$.
The surface $S$ separating $U_+$ and $U_-$ is the union of the switching 
surface $W_-$, the singular surface $\Gamma_s$ foliated by singular arcs,
the splitting locus $C_s$ and a subset 
$\tilde{\mathcal{E}}_- \subset \mathcal{E}_-$ from which $\sigma_{\pm}$
intersects $N$ in the green region of Fig. \ref{fig:amax}.
The set $S$ is represented in Fig. \ref{fig:strata-a} together with 
some trajectories emanating from $N$ to the components of $S$.
The green trajectories starting from $\mathcal{E}_+$ intersect $N$ in 
$\tilde{\mathcal{E}}_-$.    

\begin{figure}[htpb]
  \centering
  \includegraphics[scale=0.25]{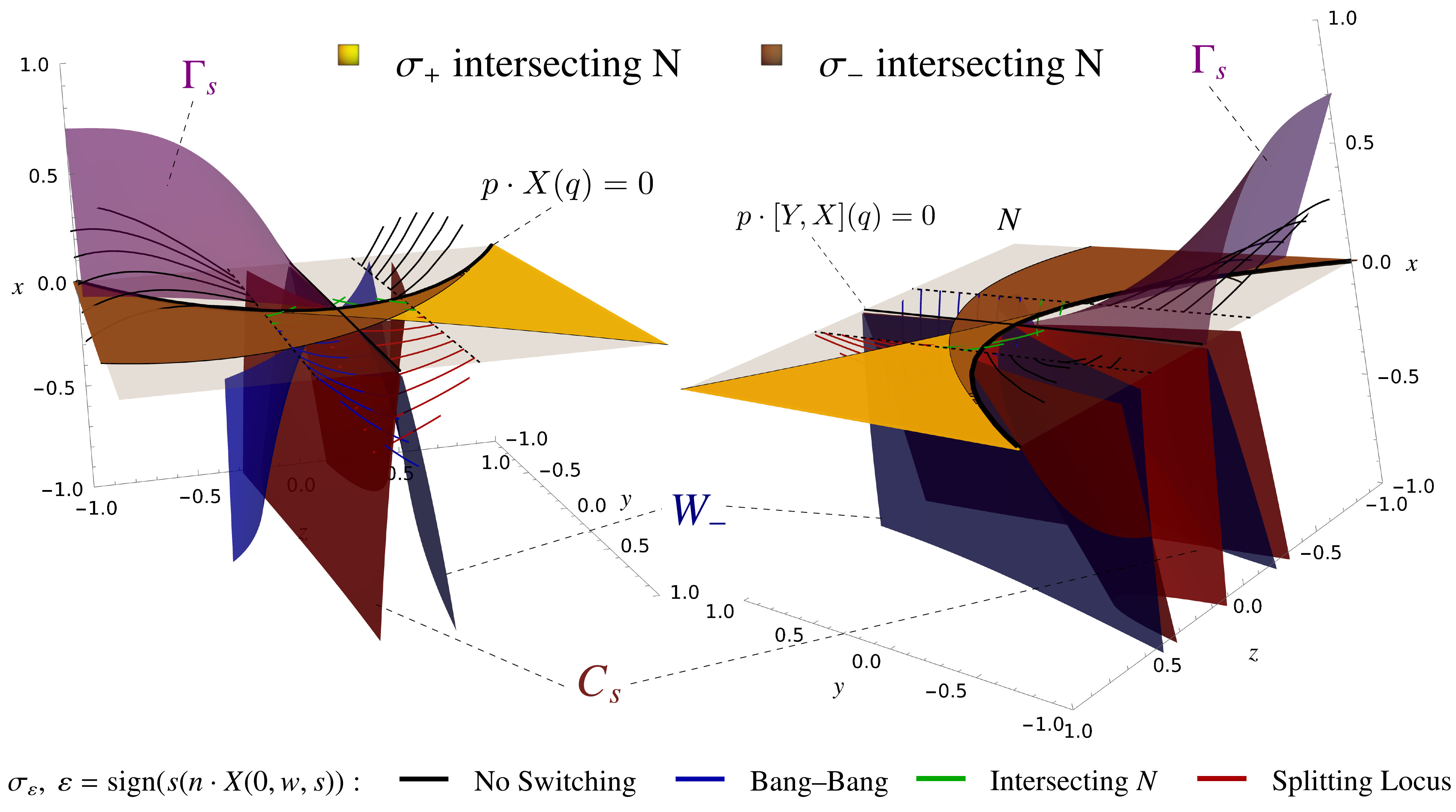}
  \caption{Strata of the surface $S$ separating the regions of $U$ where the control is $\pm 1$ for the model \eqref{mod:codim2sing} with
    $b=b_1=1,\ c=0$.
    We also represent the regions where $\sigma_\pm$ intersect $N$ for $t<0$
    and several trajectories which generate a switching locus and
    a splitting locus.   
  \label{fig:strata-a}}
\end{figure}

\begin{figure}[htpb]
  \centering
  \includegraphics[scale=0.5]{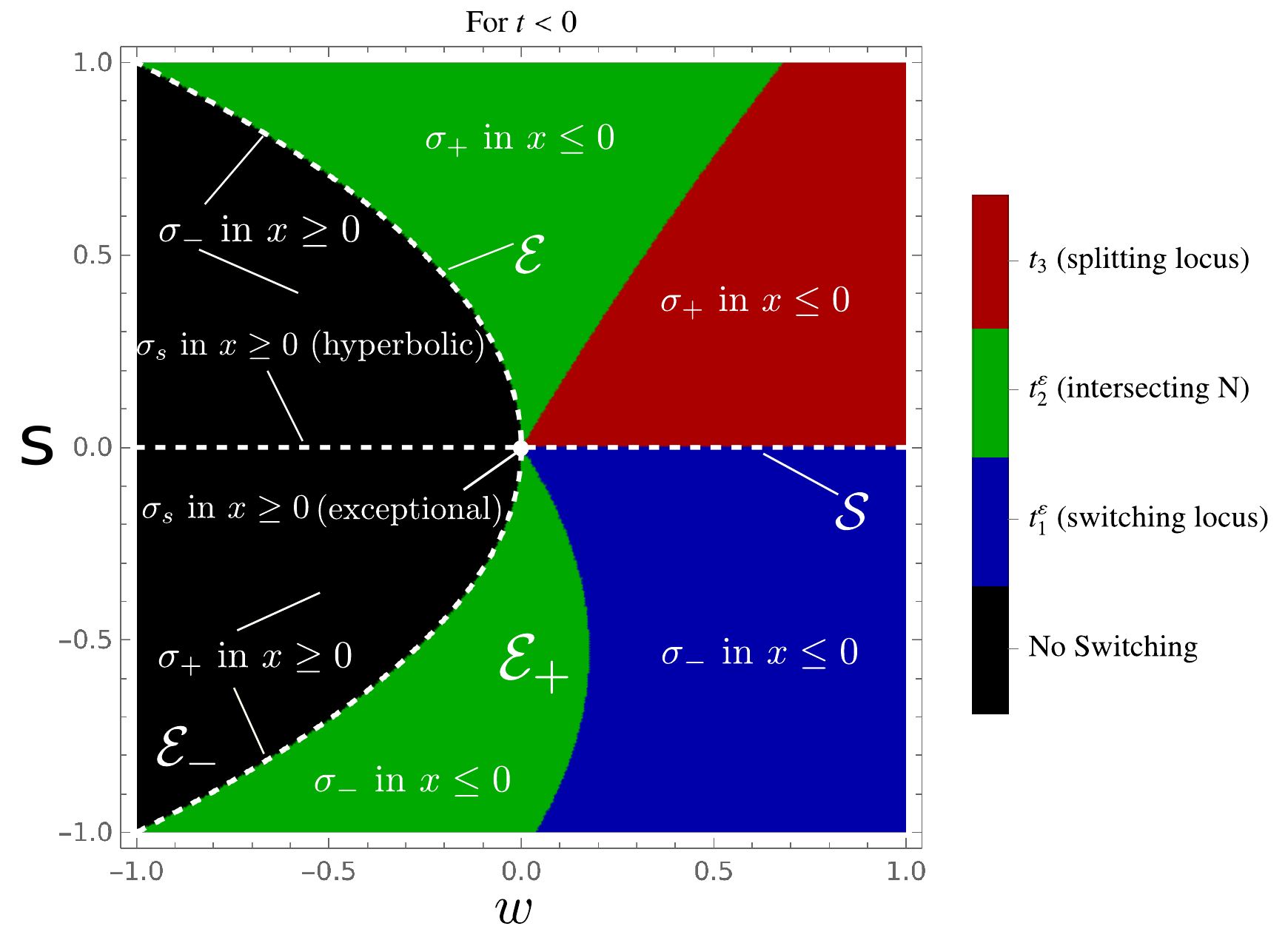}
  \caption{Minimum time 
    $t^*=\max(t_1^{\varepsilon},t_2^{\varepsilon},t_3)$ to reach $(0,w,s)\in N$ from a neighbourhood $U$ of $0$
    for the model \eqref{mod:codim2sing} with
    $b=b_1=1,\ =c=0$. The exceptional locus is $\mathcal{E} : y=-z^2$ and the singular locus
    is $\mathcal{S}: n\cdot [Y,X](q)=0 : z=0$.  
    \label{fig:amax}
  }
\end{figure}

\begin{figure}[htpb]
  \centering
  \def\svgwidth{0.47\textwidth}
  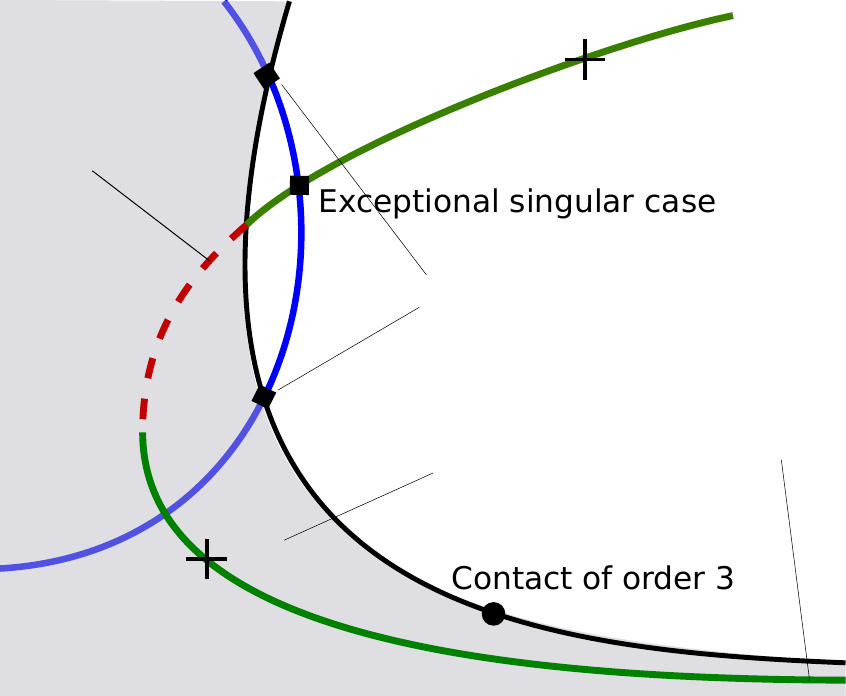
  \caption{
    Traces of the surfaces $\mathcal{E}$, $\mathcal{C}$
    and $\mathcal{S}$ on the terminal manifold $N$ that represent
  its stratification.}
  \label{fig:stratification_mck}
\end{figure}


\section{Conclusion}

In this article, we provide a general framework to analyze accessibility 
properties of abnormal geodesics using two case studies.

The first case is motivated by the historical Zermelo navigation problem,
which is generalized into $2d$-navigation problem.
A barrier is formed by decomposing the state space into strong and weak current
domains.
Abnormal geodesics are limit curves of the set of admissible directions.
We show that they reflect on this barrier with a cusp singularity and we analyze 
this phenomenon using a semi-normal form to evaluate the value function.

The second case is motivated by the problem of optimizing the production of one
species for chemical reactors using the derivative of the temperature as control.
In this case, the maximized Hamiltonian is nonsmooth and geodesics are 
concatenation of bang and singular arcs.
Again we concentrate to the case of abnormal cases.
The various cases are described using semi-normal forms to compute the time
minimal synthesis and evaluate the value function.

\medskip
Received xxxx 20xx; revised xxxx 20xx; early access xxxx 20xx.
\medskip

\end{document}